\documentclass[11pt, a4paper]{amsart}
\usepackage[utf8]{inputenc}
\usepackage{amsfonts, amssymb, amsmath, textcomp, amsthm, xcolor, mathrsfs}
\usepackage{appendix}
\usepackage{url}

\def\C{\mathbb{C}}\def\nint{\mathop{\diagup\kern-13.0pt\int}}
\def\Z{\mathbb{Z}}

\def\beq{\begin{equation}}
\def\endeq{\end{equation}}
\def\bg{\begin{gathered}}
\def\eg{\end{gathered}}

\newtheorem{thm}{Theorem}[section]
\newtheorem{prop}[thm]{Proposition}

\newtheorem{lem}[thm]{Lemma}

\newtheorem{defi}[thm]{Definition}

\newtheorem{rem}[thm]{Remark}
\newtheorem{question}[thm]{Question}

\newtheorem{conj}[thm]{Conjecture}

\title{Mixed-norm Brascamp-Lieb inequalities}
\author{Ruixiang Zhang}
\date{\today}
\thanks{RZ is supported by NSF CAREER DMS-2143989.}

\begin{document}

\maketitle

\begin{abstract}
Christ \cite{christ2001certain} looked at a certain boundedness problem for trilinear operators. In this paper we set up a framework of mixed-norm Brascamp-Lieb inequalities and identify a new interesting regime not covered by the study of classical Brascamp-Lieb. Positive results in \cite{christ2001certain} can be viewed as the first nontrivial progress in this new regime. We will also prove another mixed-norm Brascamp-Lieb inequality, showcasing how one can use a tensor product trick to slightly sharpen Christ's argument and also obtain the endpoint case. We then discuss the connections between mixed-norm Brascamp-Lieb and the Kakeya Conjectures, as well as unique new difficulties for these mixed-norm Brascamp-Lieb inequalities compared to the classical setting.
\end{abstract}

\tableofcontents

\section{Introduction}

\subsection{Purpose of the paper}

The purpose of this paper is twofold. On the concrete side, we will prove the following quadrilinear inequality:

\begin{thm}\label{mixedBLineqforR}
\begin{equation}\label{mainthmBLineq}
    \|f_1 (x+y)^{\frac{2}{7}}f_2 (-x+y)^{\frac{2}{7}}f_3 (y)^{\frac{2}{7}}f_4 (x+3y)^{\frac{1}{7}}\|_{L_x^{\frac{7}{4}} L_y^{\frac{7}{3}}} \lesssim \|f_1\|_{L^1}^{\frac{2}{7}}\|f_2\|_{L^1}^{\frac{2}{7}}\|f_3\|_{L^1}^{\frac{2}{7}}\|f_4\|_{L^1}^{\frac{1}{7}}
\end{equation}
where $f_1, f_2, f_3, f_4: \mathbb{R} \to \mathbb{R}_{\geq 0}$.
\end{thm}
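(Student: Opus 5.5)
The plan is to first strip away the exponents and reduce to a multilinear form with a single quasi-norm. Put $g_i=f_i^{2/3}$ for $i=1,2,3$ and $g_4=f_4^{1/3}$. Raising \eqref{mainthmBLineq} to the power $7/4$ and unwinding the mixed norm, the claim becomes
\begin{equation*}
\Big(\int_{\mathbb{R}} G(x)^{3/4}\,dx\Big)^{4/3}\lesssim \|g_1\|_{L^{3/2}}\|g_2\|_{L^{3/2}}\|g_3\|_{L^{3/2}}\|g_4\|_{L^{3}},
\end{equation*}
where
\begin{equation*}
G(x)=\int_{\mathbb{R}} g_1(x+y)g_2(y-x)g_3(y)g_4(x+3y)\,dy .
\end{equation*}
The difficulty is that the outer exponent $3/4$ is below $1$. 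Minkowski is therefore unavailable, and the estimate cannot be read off from a classical Brascamp--Lieb datum. Scaling and the weights have been checked to match.

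Next I will reduce to characteristic functions. By a level-set decomposition of each $g_i$, i.e.\ restricted weak type plus multilinear interpolation (Christ's style), it suffices to prove the bound for $g_i=1_{E_i}$, possibly up to a loss that is logarithmic in the ratios of the $|E_i|$. The goal is then
\begin{equation*}
\int G^{3/4}\lesssim |E_1|^{1/2}|E_2|^{1/2}|E_3|^{1/2}|E_4|^{1/4}.
\end{equation*}
To prove this, I decompose $x$-space dyadically by the value of $G$: let $S_\lambda=\{x:\ G(x)\sim\lambda\}$, so that $\int G^{3/4}\approx\sum_\lambda \lambda^{3/4}|S_\lambda|$. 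For each $\lambda$ I will bound $|S_\lambda|$ in two ways.
\begin{itemize}
\item Trivially, $\lambda\lesssim\min_i |E_i|$ times the appropriate Jacobian, which controls the range of $\lambda$.
\item More substantially, $\lambda|S_\lambda|\le\int_{S_\lambda}G$, and this is a five-fold incidence count. I will estimate it by classical (finite-constant) Brascamp--Lieb/Loomis--Whitney inequalities in the $(x,y)$-plane, using three of the four linear forms $x+y$, $y-x$, $y$, $x+3y$ together with the indicator of $S_\lambda$ in $x$. Any two of these forms are independent, which is what makes those bounds finite.
\end{itemize}
Taking the optimal geometric mean of these estimates should give $\lambda^{3/4}|S_\lambda|\lesssim \lambda^{\epsilon}\cdot(\text{target})$ or $\lambda^{-\epsilon}\cdot(\text{target})$ on either side of a critical level $\lambda_*$. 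Summing the geometric series then yields the restricted bound. The exact exponent count (weights $1/2,1/2,1/2,1/4$) is what singles out a unique combination of the available bounds.

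The main obstacle is the endpoint. The interpolation among the restricted estimates and the sum over the levels $\lambda$ naturally lose a logarithm, because we are exactly at the critical exponents and not in the interior of a region. To remove this loss I will use the tensor product trick. The same argument works verbatim for functions on $\mathbb{R}^n$ with the same linear maps tensored by $I_n$, and it gives the weaker bound with a loss like $C\cdot(\log$ of size ratios$)^{O(1)}$, independent of $n$ apart from $C^n$-type constants. Conversely, the sharp inequality on $\mathbb{R}^n$ for tensor products $f_i=\otimes f_i^{(k)}$ factorizes, because both the mixed norm and the $L^1$ norms are multiplicative under tensoring. The plan is therefore to prove the lossy estimate uniformly in dimension, apply it to $f^{\otimes n}$, take $n$-th roots so that the logarithmic loss disappears as $n\to\infty$, and obtain \eqref{mainthmBLineq}.

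The delicate point I expect to have to handle is the passage from restricted-type/level-set bounds back to general $f_i$ with only a $\log$ loss that tensorizes away. I would first try to set up the loss in the form $(1+\log(\|f\|_\infty/\|f\|_1 \cdot |\mathrm{supp} f|))^{C}$, applied to functions normalized to be essentially flat on dyadic pieces, so that it grows only polynomially in $n$ under tensoring.
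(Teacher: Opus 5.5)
\textbf{The core step fails.} The per-level bound $\lambda^{3/4}|S_\lambda|\lesssim |E_1|^{1/2}|E_2|^{1/2}|E_3|^{1/2}|E_4|^{1/4}$ cannot be obtained from planar classical Brascamp--Lieb/Loomis--Whitney bounds plus trivial bounds, however you take geometric means.

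Test it with each $E_i$ a union of $N$ unit intervals and $\lambda\sim 1$. You then need $|S_\lambda|\lesssim N^{7/4}$. Every bound of the kind you propose reads $\lambda|S_\lambda|\le\int 1_{S_\lambda}(x)\prod_i 1_{E_i}(\ell_i(x,y))\,dx\,dy\lesssim |S_\lambda|^{\theta_0}\prod_i|E_i|^{\theta_i}$ with $\theta_0+\sum_i\theta_i=2$ and every $\theta\le 1$. At $\lambda\sim1$ this gives only $|S_\lambda|\lesssim N^{1+1/(1-\theta_0)}$, never better than $N^2$. The bounds $\lambda\lesssim|E_i|$ say nothing about $|S_\lambda|$, so no geometric mean beats exponent $2$.

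This is a structural obstruction, not bookkeeping. All these inequalities survive a small perturbation of the directions with comparable constants. But for the perturbation of the forms $y$ and $x+3y$ used in the proof of Theorem~\ref{unfortunatethm}, there are $\gtrsim Q^2$ essentially disjoint unit fibres with $\lambda\sim1$, while each $E_i$ consists of $O(Q)$ unit intervals. So $|S_\lambda|\sim N^2$ genuinely occurs there. Any perturbation-stable argument is stuck at the $q=2$ threshold. This is the same obstruction you yourself noted for the full inequality, and it persists at the restricted level.

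\textbf{What is missing} is arithmetic input that uses the exact coefficients. In the paper this is Lemma~\ref{restrictedweak}, proved by the Katz--Tao argument in a discrete group $G$. Let $Z\subset G^2$ be the incidence set with at most $\lambda$ points per $x$-fibre.
\begin{itemize}
\item Cauchy--Schwarz counting gives $\gtrsim |Z|^4/(|S_1|^2|S_2||S_3|)$ quadruples $(z_1,\dots,z_4)$ with $\pi_{1,1}(z_1)=\pi_{1,1}(z_2)$, $\pi_{1,1}(z_3)=\pi_{1,1}(z_4)$, $\pi_{-1,1}(z_1)=\pi_{-1,1}(z_3)$ and $\pi_{0,1}(z_2)=\pi_{0,1}(z_4)$.
\item The identity $2x_1=-(-x_2+y_2)-2y_3+(x_4+3y_4)$ uses the coefficient $3$ and the absence of $2$-torsion. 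It shows each such quadruple is determined by $\pi_{-1,1}(z_2)$, $\pi_{0,1}(z_3)$, $\pi_{1,3}(z_4)$ and one of at most $\lambda$ points in a fibre.
\item Hence $|Z|^4\le\lambda|S_1|^2|S_2|^2|S_3|^2|S_4|$.
\end{itemize}
That is exactly your per-level estimate, uniform in $\lambda$. There is no $\lambda^{\pm\epsilon}$ gain, so summing over levels does cost a logarithm.

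Your tensor-power idea for removing that logarithm is the right one, but note how the paper implements it. The restricted bound holds with constant $1$ in every torsion-free group. So the log-lossy inequality is first proved on $G$ for integer-valued $f_j$, where only $O(\log\|f\|_{total})$ dyadic levels arise. It is then applied on $G^N$ to tensor powers with $N\to\infty$. Only afterwards is it transferred from $\mathbb{Z}$ to $\mathbb{R}$ via step functions, rescaling and Fatou. Working directly on $\mathbb{R}^n$ would force you to control infinitely many small levels of $G$ and of the $f_j$.
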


To motivate \eqref{mainthmBLineq}, note that related trilinear inequalities (and some aspects of the multilinear ones through a different but equivalent formulation) were already studied by the work of Christ \cite{christ2001certain}. Our second purpose is to give a slightly different viewpoint and think of Christ's result and \eqref{mainthmBLineq} as ``mixed-norm Brascamp-Lieb inequalities''. We will compare them with classical Brascamp-Lieb inequalities, highlighting their key differences, and explain some connections between mixed-norm Brascamp-Lieb (in $\Bbb{R}^2$) and Kakeya (in $\Bbb{R}^d$).

\subsection{Background of classical Brascamp-Lieb}

we start by explaining the classical Brascamp-Lieb inequalities. 

Suppose $m \in \Bbb{Z}^+$ and $p_1, \ldots, p_m >0$. Suppose $B_1, \ldots, B_m$ are orthogonal projections from $\Bbb{R}^n$ to subspaces $H_1, \ldots, H_m$. If for arbitrary non-negative functions $f_1 , \ldots, f_m$ on $H_1, \ldots, H_m$ respectively we always have 
\begin{equation}\label{BL}
    \int_{\Bbb{R}^n} \prod_{j=1}^m f_j (B_j x)^{p_j} \lesssim \prod_{j=1}^m \left(\int_{H_j} f_j\right)^{p_j},
\end{equation}
then we call \eqref{BL} a \emph{Brascamp-Lieb inequality}. The survey \cite{Zhang2022Brascamp} by the author and the references therein record historical influences of Brascamp-Lieb inequalities and in particular the impact of this inequality in Fourier analysis as a key feature we will return to.

There is a simple condition dictating whether \eqref{BL} can hold:

\begin{thm}[Bennett-Carbery-Christ-Tao \cite{bennett2008brascamp}, see also \cite{bennett2005finite}]\label{BCCTfinite}
Given $(B_1, \ldots, B_m, p_1, \ldots, p_m)$. (\ref{BL}) holds if and only if we have both the \emph{scaling condition}:
\begin{equation}\label{scalingcondBL}
    n = \sum_{j=1}^m p_j \dim H_j
\end{equation}
and the \emph{dimension condition}: 
\begin{equation}\label{dimcondBL}
    \dim V \leq \sum_{j=1}^m p_j \dim (B_j V), \forall \text{ subspace } V \subset \Bbb{R}^n.
\end{equation}
\end{thm}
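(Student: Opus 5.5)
The plan is to prove the two directions separately: necessity by testing \eqref{BL} on indicator functions, and sufficiency by reducing to Gaussians via Lieb's theorem and then inducting on $n$ through critical subspaces.

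\emph{Necessity.} First, if $\bigcap_j \ker B_j$ were a nonzero subspace $V$, then taking $V$ in \eqref{dimcondBL} gives $\dim V\le 0$, which fails. Correspondingly, the left side of \eqref{BL} is infinite for any compactly supported positive $f_j$, so we may assume this intersection is trivial. For the scaling condition, take $f_j=1_{\{y\in H_j:|y|\le R\}}$. Since $x\mapsto (B_jx)_j$ is injective, the left side of \eqref{BL} is comparable to $R^n$, while the right side is $\sim R^{\sum_j p_j\dim H_j}$. Letting $R\to 0$ and $R\to\infty$ forces \eqref{scalingcondBL}.

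For the dimension condition, fix a subspace $V$ and small $\delta>0$, and take
\[
f_j=1_{\{y\in H_j:\ |y|\le C,\ \mathrm{dist}(y,B_jV)\le\delta\}}.
\]
Every $x$ with $|x|\le 1$ and $\mathrm{dist}(x,V)\le\delta$ satisfies $f_j(B_jx)=1$ for all $j$, so the left side of \eqref{BL} is at least $c\,\delta^{\,n-\dim V}$. The right side is $\sim \delta^{\sum_j p_j(\dim H_j-\dim B_jV)}$, which by \eqref{scalingcondBL} equals $\delta^{\,n-\sum_j p_j\dim B_jV}$. Letting $\delta\to0$ yields \eqref{dimcondBL}.

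\emph{Sufficiency.} By Lieb's theorem that Gaussians exhaust the Brascamp-Lieb constant, it suffices to prove
\[
\det\Bigl(\sum_j p_jB_j^*A_jB_j\Bigr)\ \ge\ c\prod_j(\det A_j)^{p_j}
\]
uniformly over positive definite $A_j$ on $H_j$. I would induct on $n$.

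\emph{Critical case.} Suppose some proper nonzero $V$ attains equality in \eqref{dimcondBL}. Then I would split the datum into two pieces:
\begin{itemize}
\item the restricted datum $(B_j|_V: V\to B_jV)$;
\item the quotient datum $\bigl(\mathbb{R}^n/V\to H_j/B_jV\bigr)$.
\end{itemize}
Next I would check that each piece again satisfies \eqref{scalingcondBL} and \eqref{dimcondBL}; this uses criticality of $V$ together with the inequality for $V\cap W$ and $V+W$. By Fubini, writing $x=(v,w)$ and freezing $w$ in the inner integral over $V$, the inequality factors: the restricted datum controls the inner integral, and the quotient datum controls the outer integral of the resulting fiber norms. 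Both pieces are lower-dimensional, so the induction hypothesis applies.

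\emph{Simple case (main obstacle).} When \eqref{dimcondBL} is strict for every proper nonzero $V$, I would normalize $\prod_j\det A_j=1$ and argue by contradiction. Take a sequence along which the determinant ratio tends to $0$, and pass to a subsequence on which the eigenvectors of the $A_j$ converge and the eigenvalues separate into scales. The degenerating directions then determine a flag of subspaces. On each layer of the flag, computing the determinant in adapted coordinates shows that the ratio is bounded below by a product of powers of eigenvalue scales. The exponents are $\sum_j p_j\dim B_jV-\dim V$, which are strictly positive by the strict inequality in \eqref{dimcondBL}, contradicting the ratio tending to $0$. Making this compactness and flag bookkeeping rigorous is the step I expect to be hardest. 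As a fallback, one could run the heat-flow monotonicity argument of Bennett-Carbery-Christ-Tao instead.
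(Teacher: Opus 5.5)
The paper gives no proof of this theorem (it is quoted from Bennett--Carbery--Christ--Tao), so your proposal has to stand on its own. The necessity half does: the ball test yields the scaling condition and the $\delta$-neighbourhood test yields the dimension condition, exactly as you wrote. For sufficiency, the Gaussian reduction via Lieb's theorem is correct, and so is the factorization through a critical subspace. Checking the two pieces only needs criticality of $V$ plus the original condition for subspaces contained in, or containing, $V$. One bookkeeping point: the restricted and quotient maps are general linear surjections rather than orthogonal projections, so run the induction over that class. This is harmless, since composing $f_j$ with an invertible linear map only changes constants and leaves $\dim B_jV$ unchanged.

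The step that is not yet a proof is the simple case, and ``computing the determinant in adapted coordinates'' is where the difficulty hides. At stage $N$ the eigenframes of $A_j^{(N)}$ are only close to the limiting ones. Moreover, subspaces such as $\bigcap_j B_j^{-1}(E_{j,\ell}^\perp)$, with $E_{j,\ell}\subset H_j$ spanned by the top eigenvectors of $A_j$, do not vary continuously; their dimension can jump in the limit. So a block computation must control cross terms of size (large eigenvalue)$\times$(small angle). A clean way through is Cauchy--Binet. Write $A_j=\sum_k\lambda_{jk}e_{jk}e_{jk}^*$ with $\{e_{jk}\}_k$ an orthonormal basis of $H_j\subset\mathbb{R}^n$; since $B_j^*$ is the inclusion,
\[
\det\Bigl(\sum_j p_jB_j^*A_jB_j\Bigr)=\sum_{|S|=n}\Bigl(\prod_{(j,k)\in S}p_j\lambda_{jk}\Bigr)\det[e_{jk}]_{(j,k)\in S}^2,
\]
a sum of nonnegative terms. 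Putting $W=V^\perp$ and using the scaling condition, the dimension condition is equivalent to $\sum_jp_j\dim(H_j\cap W)\le\dim W$ for all $W$. Applied to $W=\mathrm{span}\{e_{jk}:(j,k)\in T\}$, this gives $\sum_{(j,k)\in T}p_j\le\mathrm{rank}(T)$. Together with $\sum_jp_j\dim H_j=n$, Edmonds' theorem then shows that the vector with entry $p_j$ at $(j,k)$ is a convex combination of indicators of bases of the linear matroid $\{e_{jk}\}$. This holds for every choice of orthonormal frames, limiting ones included. Consequently $\prod_{j,k}(\lambda^{(N)}_{jk})^{p_j}\le\max_S\prod_{(j,k)\in S}\lambda^{(N)}_{jk}$, the maximum taken over bases $S$ of the limit configuration. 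For each such $S$ the factor $\det[e^{(N)}_{jk}]_S^2$ is bounded below for large $N$. Hence the ratio stays $\ge c>0$ along your subsequence, which is the desired contradiction. The maximizing basis is the greedy, flag-adapted one, so this is your flag bookkeeping made rigorous.

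This argument uses only the non-strict dimension condition, so it proves sufficiency outright; the critical/simple induction is not needed for finiteness. Your exponents are in general only $\ge\sum_jp_j\dim B_jV-\dim V$, not equal, but that is the harmless direction. What strictness does buy, via your computation, is coercivity of the Gaussian functional, hence existence of Gaussian extremizers for simple data. Finally, the heat-flow fallback would not settle this step by itself: monotonicity compares general inputs with Gaussians, whereas what is needed here is finiteness of the Gaussian constant.
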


On the contrary, if we replace the norm on the left hand side of \eqref{BL} by a mixed-norm, in general there is no clean characterization like Theorem \ref{BCCTfinite} on whether such an inequality holds, even in the ``simplest case'' of dimension $n=2$. In what follows, we discuss a rich and unique set of phenomena and connections for mixed-norm inequalities.

\subsection{The mixed-norm case for $n=2$}

In this paper, we only look at the case when dimension $n=2$ and all $\dim H_j$ = 1. In this setting, the underlying mixed-norm question is

\begin{question}\label{BLques1}
Let $m \in \Bbb{Z}^+$. For which $p_1, \ldots, p_m, q, r >0$ and orthogonal projections $B_1, \ldots, B_m$ from $\Bbb{R}^2$ to one-dimensional subspaces $H_1, \ldots, H_m$ is there a finite constant $C$ such that
\begin{equation}\label{BLmix}
    \|\prod_{j=1}^m f_j (B_j (x, y))^{p_j}\|_{L_x^q L_y^r} \leq C \prod_{j=1}^m \|f_j\|_{L^1}^{p_j}
\end{equation}
holds, whenever $f_1 , \ldots, f_m \geq 0$?
\end{question}

We will discuss basic benchmarks of this problem in Section \ref{basicssec}. We can easily reduce to the case where $\sum_{j=1}^m p_j = 1$ and the correct constraint here is $\frac{1}{q} + \frac{1}{r} = 1$. We can also assume no $H_j$ equals the $x$-axis.

In Section \ref{basicssec}, we will see that the case $q<2$ is the most interesting and cannot be obtained from classical Brascamp-Lieb. 
In the $m=3$ case, \eqref{BLmix} for $q<2$ was systematically investigated by Christ \cite{christ2001certain} and he obtained the following theorem: 

\begin{thm}[Follows from Theorem 1 of \cite{christ2001certain}]\label{Christthm}
    Let $m=3$ and $\frac{1}{q}+\frac{1}{r} = 1$.

    (a) If $H_1, H_2, H_3$ are disjoint rational subspaces not equal to $x$-axis, then \eqref{BLmix} always holds for $p_1=p_2=p_3 =\frac{1}{3}$ and some $q<2$.\footnote{A consequence of this for the $k$-linear case $(k>3)$ also follows directly, see \S 7, Remark 4 in \cite{christ2001certain}.}

    (b) If there are two different rational subspaces and one irrational subspace among $H_1, H_2, H_3$, then \eqref{BLmix} cannot hold for $p_1=p_2=p_3 =\frac{1}{3}$ and any $q<2$.
\end{thm}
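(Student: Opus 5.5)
The plan is to \emph{translate} \eqref{BLmix} with $m=3$, $p_1=p_2=p_3=\frac13$ into the form in which Christ states his Theorem 1, and then read off both parts. First substitute $g_j=f_j^{1/3}$, so that \eqref{BLmix} becomes
\begin{equation*}
\Big\|\prod_{j=1}^3 g_j(B_j(x,y))\Big\|_{L_x^qL_y^r}\leq C\prod_{j=1}^3\|g_j\|_{L^3}.
\end{equation*}
Since no $H_j$ is the $x$-axis, each $B_j(x,y)$ is a nonzero multiple of a linear form $x+a_jy$, or of $y$ alone. Rescaling each $g_j$ costs only a constant. A linear change of variables $y\mapsto \lambda y+\mu x$ preserves the fibres $\{x=\text{const}\}$. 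It changes the $L^q_xL^r_y$ norm only by a Jacobian factor $|\lambda|^{-1/r}$, and it maps lines to lines. Using it, I reduce to the configuration
\begin{equation*}
\Big\|\,\big\|g_1(x+a_1y)\,g_2(x+a_2y)\,g_3(x+a_3y)\big\|_{L^r_y}\Big\|_{L^q_x}\lesssim\prod_j\|g_j\|_{L^3}.
\end{equation*}
The $a_j$ are distinct. Rationality of each $H_j$ is preserved if the change of variables is chosen rational, so in case (a) I can clear denominators and take $a_j\in\Z$.

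Next I raise the inner norm to the $r$-th power and write $F_j=g_j^r$. The inequality becomes an $L^{q/r}_x$ bound for the operator
\begin{equation*}
x\mapsto\int F_1(x+a_1y)F_2(x+a_2y)F_3(x+a_3y)\,dy
\end{equation*}
from $L^{3/r}\times L^{3/r}\times L^{3/r}$. This is exactly the type of multilinear operator with integer (or, more generally, real) dilation parameters that Theorem 1 of \cite{christ2001certain} treats. I will check that $\frac1q+\frac1r=1$ with $q<2$ (so $r>2$) corresponds, under this dictionary, to exponents strictly inside Christ's range. The exponent relation is forced by scaling, so the only content is the open range.

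For part (a), Christ proves the bound for some exponent beyond the trivial endpoint when the $a_j$ are integers. The trivial endpoint corresponds to $q=2$, which follows from classical Brascamp--Lieb via Minkowski. Christ's bound therefore gives \eqref{BLmix} for some $q<2$. For part (b), I would invoke the necessity half of Christ's theorem, namely the counterexample built when the parameters are not all commensurable. Concretely, with two rational forms and one irrational one, one takes $g_1,g_2$ to be sums of narrow bumps on long arithmetic progressions adapted to the rational slopes. Dirichlet approximation then puts the irrational form $x+a_3y$ within distance $\delta$ of a matching progression on a large set, which produces the $\delta$-dependent loss that rules out every $q<2$.

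The step I expect to be the main obstacle is the bookkeeping in the dictionary. I must confirm that the admissible normalizations (the fibre-preserving shear and scaling) really carry "two rational and one irrational subspace" to Christ's non-commensurable hypothesis, and "three rational subspaces" to integer parameters. I also need the exponent correspondence to show that "some $q<2$" matches his improving region exactly rather than only on one side.
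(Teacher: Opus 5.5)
Your proposal is correct and follows the paper's route. The paper also gets this theorem from the elementary dictionary between \eqref{BLmix} and Christ's form \eqref{christconv}, citing Theorem 1 of \cite{christ2001certain} for both parts, and your computation $t=q/r=q-1$, $u_j=3/r$ confirms its remark that $q<2$ corresponds exactly to $t<1$ (your Dirichlet-approximation sketch for (b) is also the idea the paper reuses in the proof of Theorem \ref{unfortunatethm}).
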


Note that  \eqref{mainthmBLineq} with  $L_x^q L_y^r$-norm on LHS for some $q<2$ is an immediate consequence of Theorem \ref{Christthm} plus interpolation. Indeed, in \cite{christ2001certain}, Theorem \ref{Christthm} (a) was proved for every $m \geq 3$.

Theorem \ref{Christthm} already looks very different from Theorem \ref{BCCTfinite}, indicating unusual aspects of the $q<2$ case. All $p_j$ are taken to be equal in \cite{christ2001certain}, both for the sake of simplicity and because this setup is related to a question of Kenig and Stein \cite{kenig1999multilinear} (see also  the related \cite{grafakos2001some}). Nevertheless, the more general case where $p_1, p_2, p_3$ are not necessarily the same were also covered in \cite{christ2001certain}, and is actually the way Theorem \ref{Christthm} is proved there. 

\begin{rem}
    For readers familiar with \cite{christ2001certain}, \eqref{BLmix} will appear different in normalization of exponents. Indeed, \cite{christ2001certain} considers inequalities of the form \begin{equation}
        \label{christconv}\|\int \prod_j (g_j \circ B_j)(x, y)\mathrm{d}y\|_{L_x^t} \lesssim \prod_j \|g_j\|_{u_j}.
    \end{equation} Nevertheless, it can be elementarily seen that \eqref{BLmix} and \eqref{christconv} are equivalent, in the sense that given an inequality of one form it is equivalent to the other inequality for some index and norm choices. The $q<2$ case in \eqref{BLmix} corresponds precisely to the $t<1$ case in \eqref{christconv}.
\end{rem}

\subsection{Our results}

Let us summarize what we will prove in  this paper. In \cite{christ2001certain}, to obtain new cases of \eqref{BLmix}, one first obtain a restricted weak type estimate and then relies on interpolation. A priori, this method only works for an open range of $(p_1, p_2, p_3)$ and does not cover the endpoint of that range. The first thing we do (Theorem \ref{transthm}) is to use a tensor product trick to prove that if a restricted weak type estimate is obtained in discrete  analogues of \eqref{BLmix} with a uniform implied constant, it can be automatically upgraded to an honest mixed-norm estimate. As an application example, we prove Theorem \ref{mixedBLineqforR}. The corresponding restricted weak type estimate, Lemma \ref{restrictedweak}, is a generalization of (6) in \cite{katz1999bounds}. Note that this kind of generalization was already used in \cite{christ2001certain} to obtain restricted weak type estimates. Indeed some $L_x^q L_y^r (q<2)$ version of Theorem \ref{mixedBLineqforR} is a corollary of  Theorem \ref{Christthm}.

\cite{christ2001certain} and  the present paper are both related to efforts towards the \emph{Kakeya Conjectures} \cite{bourgain1999dimension, katz1999bounds, katz2002new}. Along this line we explain a natural connection (Theorem \ref{BLmiximplieskakeya}) between \eqref{BLmix} and Kakeya: If one can take $q$ arbitrarily close to $1$, then the Hausdorff version of Kakeya (in \emph{any} dimension) is true. This part essentially follows the framework of Bourgain \cite{bourgain1999dimension} (see also \cite{katz1999bounds, katz2002new}) but is only possible because of very recent progress on quantitative Szemer\'{e}di's Theorem \cite{green2009new, green2017new, leng2023improved, leng2024improved}. The literature only seems to have the Minkowski dimension version\footnote{In the final editing stage of this note, I learned from \cite{tao2025sum} that the same observation was made by Thomas Bloom.}, so we work out necessary modifications for the Hausdorff version.

We will also turn to a technical comparison between \eqref{BLmix} and \eqref{BL}. When $q<2$ there are key differences between natures of the two. As Christ \cite{christ2001certain} pointed out, unlike \eqref{BL}, the extremizers for the $q<2$ case of \eqref{BLmix} are in general not symmetric non-decreasing. We will show another difference based on the techniques in \cite{christ2001certain}: A natural and desirable perturbed version of \eqref{BLmix} can never hold. Thus, new challenges show up if one wants to apply recent developments like \cite{bennett2006multilinear, guth2010endpoint, guth2015short, bennett2018stability} in the mixed-norm setting for $q<2$.

\subsection{Outline of the paper} In Section \ref{basicssec}, we formulate the perturbed Brascamp-Lieb inequality and present some basic knowledge about them. In Section \ref{upgradeWsec}, we prove that knowing the restricted weak type estimates in the discrete setting automatically implies the endpoint mixed-norm Brascamp-Lieb. In Section \ref{restrictedweak1sec}, we prove \eqref{mainthmBLineq} as an application of the above principle. We discuss connections to the Kakeya Conjectures and absence of the perturbed version in Section \ref{relateto Kakeyasection} and leave a technical but standard argument to Appendix \ref{HKakeyasec}.

\section*{Acknowledgement} This work is supported by NSF CAREER DMS--2143989 and Sloan Research Fellowship. The author thanks Ben Green for bringing \cite{Taoentropy} to his attention (see Remark \ref{Greenrem}), Larry Guth for making Remark \ref{katzlem} and Terence Tao for bringing \cite{christ2001certain} to his attention. He would  like to thank Michael Christ for reading through the paper and providing detailed feedback. He would also like to thank Yifan Jing and Dmitrii Zakharov for  helpful comments.

\section{Mixed norm Brascamp-Lieb in dimension $2$: Benchmarks}\label{basicssec}

When $n=2$ and all $\dim H_j = 1$, from the classical Brascamp-Lieb inequality \eqref{BL} alone, there are many situations where one can not immediately answer Question \ref{BLques1}.

Before further explanation, we make some simple remarks.

First, note that for each $j$, on both sides of \eqref{BLmix} we request the powers of $f_j$ to be the same. This is necessary and can be seen by replacing $f_j$ by $\lambda f_j$.

Second, the case $q=r$ for \eqref{BLmix} is  covered by the theory of \eqref{BL}, and the interesting new scenario is when $q \neq r$.

Third, if one $B_j$ is the projection to $x$-axis, then the study of \eqref{BLmix} can be fully reduced to the theory without this $B_j$ by H\"{o}lder. Hence, we will always assume none of $H_j$ is equal to the $x$-axis.

\subsection{Necessary conditions}
In the classical setting \eqref{BL}, the scaling condition forces $\sum_{j=1}^m p_j =2$. By scaling all the $f_j$'s together, we see for \eqref{BLmix} there is again a necessary condition
\begin{equation}\label{originalscalingmix}
    \frac{1}{q}+ \frac{1}{r} = \sum_{j=1}^m p_j.
\end{equation}

Also, by raising \eqref{BL} to the $\beta$-th power ($\beta >0$), we get an equivalent inequality with $p_j, q, r$ replaced by $\beta p_j, \frac{q}{\beta}, \frac{r}{\beta}$, respectively. Because of this homogeneity, from now on let us always assume 
\begin{equation}\label{assumptionofqandr}
    \frac{1}{q}+ \frac{1}{r} = 1.
\end{equation}
without loss of generality.\footnote{Our normalization has the advantage that $q, r \geq 1$ always hold. As an inevitable cost, the readers may find an unharmful inconsistency: With this normalization, \eqref{BL} takes a different but equivalent form.} Now \eqref{originalscalingmix} becomes
\begin{equation}\label{scalingmix}
    \sum_{j=1}^m p_j = 1.
\end{equation}

\subsection{Endpoint cases and new phenomena for $q<2$}
Looking at \eqref{BLmix}, it is natural to ask first if it holds at the endpoints $(q, r) = (\infty, 1)$ and $(1, \infty)$.

At $(q, r) = (\infty, 1)$,  \eqref{BLmix} reads
\begin{equation}
    \int_y \prod_{j=1}^m f_j (x+c_j)^{p_j}\mathrm{d} y \lesssim \prod_{j=1}^m \|f_j\|_{L^1}^{p_j}
\end{equation}
for some $c_j$. This follows from H\"{o}lder whenever we have \eqref{scalingmix}. By mixed-norm H\"{o}lder (Theorem 5.1.2 of \cite{bergh2012interpolation}), we have established \eqref{BLmix} (given \eqref{assumptionofqandr} and \eqref{scalingmix}) when $q \geq 2$.

When $q<2$ and $r>2$, \eqref{BLmix} becomes more interesting. We record an immediate observation.

\begin{lem}[A new necessary condition]\label{newnecessarylem}
    If \eqref{BLmix} holds for some $q<2$ and $r>2$ with \eqref{assumptionofqandr} and \eqref{scalingmix}, then $m>2$.
\end{lem}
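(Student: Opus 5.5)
We argue by contradiction, treating $m=1$ and $m=2$ separately. In each case we construct explicit test functions for which the left side of \eqref{BLmix} is infinite, or grows without bound while the right side stays fixed. Throughout we use that no $H_j$ is the $x$-axis. Hence each $B_j(x,y)$ can be written, after rescaling the coordinate on $H_j$, as $y + a_j x$ for some real $a_j$, and the constant from this rescaling is harmless.

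\emph{Case $m=1$.} Here \eqref{scalingmix} gives $p_1=1$, and the left side is $\|f_1(y+a_1x)\|_{L^q_xL^r_y}$. For fixed $x$, translation invariance gives $\|f_1(\cdot+a_1x)\|_{L^r_y}=\|f_1\|_{L^r}$. The $L^q_x$ norm of this nonzero constant is infinite. So \eqref{BLmix} fails for every $q<\infty$, and in particular for $q<2$.

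\emph{Case $m=2$.} Now $p_1+p_2=1$. If $a_1=a_2$, the product is a function of $y+a_1x$ alone, and the same translation argument gives an infinite left side. So assume $a_1\neq a_2$. Then $(u,v)=(y+a_1x,\,y+a_2x)$ is a linear change of variables. We test with $f_1=\mathbf 1_{[0,\delta]}$ and $f_2=\mathbf 1_{[0,1]}$ (or more generally $\mathbf 1_{[0,\delta_1]}$, $\mathbf 1_{[0,\delta_2]}$), so the right side equals $\delta^{p_1}$. The product $f_1^{p_1}f_2^{p_2}$ is the indicator of a parallelogram $P$ with side widths $\delta$ and $1$ in the directions transverse to $H_1$ and $H_2$ (up to constants). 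Since the two lines are transverse and neither is horizontal, $P$ has $x$-extent $\sim 1$. For $x$ in a set of measure $\sim 1$, the $y$-section of $P$ has length $\sim\delta$ (the thin strip cut along a line). Hence the left side is $\sim \delta^{1/r}$, with $1/r = 1-1/q$.

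We therefore need $\delta^{1/r}\lesssim \delta^{p_1}$ for small $\delta$, i.e.\ $1/r\ge p_1$. Swapping the roles of $f_1$ and $f_2$ gives $1/r\ge p_2$. Adding these, $2/r\ge 1$, so $r\le 2$, contradicting $r>2$.

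The main obstacle is the geometric computation of the sections in the $m=2$ case. With $f_1=\mathbf 1_{[0,\delta]}$ and $f_2=\mathbf 1_{[0,1]}$, the parallelogram is $\{0\le y+a_1x\le \delta,\ 0\le y+a_2x\le1\}$. For each $x$ with $|x|\lesssim 1/|a_1-a_2|$, the constraint $0\le y+a_1x\le\delta$ gives a $y$-interval of length $\delta$. A positive proportion of that interval lies inside the second strip, for $x$ in a set of measure $\gtrsim 1$ (depending on $a_1,a_2$). This gives the lower bound $\|\cdot\|_{L^q_xL^r_y}\gtrsim \delta^{1/r}$, which is all we need. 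Constants depending on $a_j$ are allowed, and letting $\delta\to0$ completes the contradiction.
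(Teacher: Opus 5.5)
Your proof is correct, but it uses a different test example from the paper's. For $m=2$ with $H_1\neq H_2$, the paper takes $f_1,f_2$ to be sums of $N$ indicators of disjoint unit intervals. The product is then a sum of $N^2$ unit parallelograms, which can be placed with pairwise disjoint $x$-shadows. The left side is therefore $\gtrsim N^{2/q}$ while the right side is $N^{p_1+p_2}=N$, which forces $q\ge 2$ in one step. That is a multiplicity (counting) example: it sees only the sum $p_1+p_2=1$. It is the same mechanism reused in Theorem \ref{unfortunatethm}, and it is the kind of projection counting behind the $SD(\alpha)$ connection to Kakeya.

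Your example is a single-scale, Knapp-type one: a $\delta$-thin strip crossing a unit strip. It gives the per-direction constraint $p_j\le 1/r$, and you reach $r\le 2$ only after adding the two constraints. Equivalently, you could note that some $p_j\ge\frac12>\frac1r$.

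What your route buys is a necessary condition valid for every $m$. Take $f_1=\mathbf{1}_{[0,\delta]}$ and $f_j=\mathbf{1}_{[-1,1]}$ for $j\ge2$. The $y$-section of the support still has length $\delta$ for $|x|\lesssim 1$, so $\max_j p_j\le 1/r$ in general. Hence $r\le m$, i.e. $q\ge\frac{m}{m-1}$; in the normalization \eqref{christconv}, via $g_j=f_j^{p_jr}$, this is just $u_j\ge 1$. The paper's construction instead isolates the genuinely bilinear obstruction specific to $q<2$.

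You rightly rely on the standing assumption that no $H_j$ is the $x$-axis, as the paper's proof does. It is essential: if $H_1$ were the $x$-axis and $H_2$ were not, then $p_1=\frac1q$, $p_2=\frac1r$ would satisfy \eqref{BLmix} for every $q$, as a direct computation of the inner $L^r_y$ norm shows.
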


\begin{proof}
    Obviously $m>1$.
    
    If $m=2$, we need $H_1\neq H_2$. We test \eqref{BLmix} with each of $f_1$ and $f_2$ being a sum of $N$ characteristic functions of disjoint unit intervals. Then $|f_1 (B_1 x)|^{p_1} |f_2 (B_2 x)|^{p_2}$ becomes the sum of $N^2$ characteristic functions of parallelograms. We can arrange the parallelograms to have disjoint projections to $x$-axis. Then in order for \eqref{BLmix}  to hold we need
    $$(N^2)^{\frac{1}{q}} \lesssim_{B_1, B_2} N^{p_1} N^{p_2} = N.$$
    This is possible only when $q \geq 2$.
\end{proof}

Hence, if we want \eqref{BLmix} for some $q<2$, then we must go beyond the ``bilinear'' setting $m=2$ and require $m$ to be larger. This is in sharp contrast against the non-mixed-norm situation \eqref{BL}. Indeed, when $n=2$ and all $\dim H_j = 1$, \eqref{BL} is not truly new for $m>2$: It can be deduced from \eqref{BL} for $m>2$ and H\"{o}lder. In the general $n$ dimensions when all $\dim H_j = n-1$ (the Loomis-Whitney setting), there is an analogous feature of \eqref{BL} that it is not needed to go beyond the ``multilinear'' setting $m=n$. Indeed, \eqref{BL} for $m>n$ can always be deduced by the $m=n$ case plus H\"{o}lder. However, in the mixed-norm case, we must go to the case $m>n$ to discover new Brascamp-Lieb inequalities \eqref{BLmix}, even in dimension two!

In \cite{christ2001certain}, the case $m=3$ and $p_1=p_2=p_3 = \frac{1}{3}$ were studied carefully and fruitful results such as Theorem \ref{Christthm} were obtained. As noted before, they already have a very different flavor than that of \eqref{BL}.


\section{Relationship with discrete restricted weak type estimates}\label{upgradeWsec}

We change to a slightly broader setting. In this paper, $G$ will always denote a discrete abelian group and $|\cdot|$ on $G$  always denotes the counting function. We will see that a useful way to prove \eqref{BLmix} when the projections are rational is by considering its counterpart where $\mathbb{R}$ is replaced by $G$ first. This perspective has the advantage that as we will prove, it suffices to obtain a restricted weak type estimate with a uniform implied constant.  These are usually  easier to obtain.

To be precise, we start with \eqref{BLmix} with \eqref{assumptionofqandr}, \eqref{scalingmix}. Suppose in addition that all $B_j$ are \emph{rational} in the sense that there are integers $(P_j, Q_j) = 1$ such that 
\begin{equation}\label{expressionBj}
    B_j (x, y) = \frac{1}{\sqrt{P_j^2 + Q_j^2}} (P_j x + Q_j y).
\end{equation} 
We can then define the corresponding restricted weak type estimate on $G^2$.

\begin{defi}\label{restrictedweakgeneral}
For the setup above, we say the \emph{restricted weak type counterpart of \eqref{BLmix}} is satisfied for every torsion-free abelian group $G$ if the following is true:

Let $S_1, \ldots, S_m \subset G$ be finite subsets. If a finite subset $Z \subset G^2$ satisfies (i) $P_j z_1+ Q_j z_2 \in S_j, \forall (z_1, z_2)\in Z, \forall 1 \leq j \leq m$ and (ii) for every element $g\in G$, either there are $\sim M$ elements of the form $(g, \cdot)$ in $Z$, or there are none. Then
\begin{equation}\label{restrictedweakineq}
    |Z| \lesssim M^{1-\frac{q}{r}}\prod_{j=1}^m |S_j|^{p_j q}
\end{equation}
with the implied constant independent of $G$.
\end{defi}

\begin{lem}\label{Discretemainthm}
Suppose that for some choices of $m, p_j, q, r$,  every torsion-free abelian group $G$ satisfies the restricted weak counterpart of \eqref{BLmix} with a uniform implied constant and all $B_j$ being rational projections in the form \eqref{expressionBj}. Then for every such $G$ and finitely supported functions $f_1, \ldots, f_m: G \to \C$ we have the following discrete analogue of \eqref{BLmix}:
\begin{equation}\label{discreteBLineq}
    (\sum_{x \in G} (\sum_{y \in G} \prod_{j=1}^m f_j (P_j x + Q_j y)^{p_j r})^{\frac{q}{r}})^{\frac{1}{q}}\leq \prod_{j=1}^m \|f_j\|_{l^1}^{p_j}.
\end{equation}
\end{lem}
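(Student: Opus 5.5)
The plan is a tensor-power argument: first prove \eqref{discreteBLineq} with a loss of the form $C(\log(\text{stuff}))^{O(1)}$, i.e. a bound up to logarithmic factors in the sizes of the supports and the ranges of the values. Then we remove the loss by replacing $G$ by $G^N$ and $f_j$ by $f_j^{\otimes N}$. This is exactly why the hypothesis asks for a uniform constant over all torsion-free $G$.

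Both sides of \eqref{discreteBLineq} are multiplicative under tensor products. The left side is an iterated $\ell^q_x\ell^r_y$ norm of a function on $G^2$, and for $G^N$ the maps $(x,y)\mapsto P_jx+Q_jy$ act coordinatewise, so the left side for $f_j^{\otimes N}$ is the $N$-th power of the left side for $f_j$. The $\ell^1$ norms on the right are likewise multiplicative, and $G^N$ is still torsion-free. So if we show $\mathrm{LHS}\le C\,(\log K)^{A}\,\mathrm{RHS}$, with $K$ growing at most exponentially in $N$ under tensoring, we get $\mathrm{LHS}^N\le C (N\log K)^{A}\,\mathrm{RHS}^N$. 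Taking $N$-th roots and letting $N\to\infty$ gives the constant $1$.

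To prove the lossy bound, reduce to the case where each $f_j\ge0$ (take absolute values) is a step function.
\begin{itemize}
\item \emph{Finitely many levels.} Assume the $f_j$ take finitely many values. Approximate, or use that finitely supported functions already do.
\item \emph{Dyadic pigeonholing of the $f_j$.} Decompose each $f_j$ dyadically as $f_j\approx\sum_k 2^k 1_{S_{j,k}}$. Values below $\delta\|f_j\|_\infty$ contribute negligibly, so we keep only $O(\log)$ levels per $j$. By the quasi-triangle inequality for the mixed norm (a power of it is subadditive), the left side is bounded by $(\log)^{O(m)}$ times the maximum over choices of levels $k_1,\dots,k_m$ of the left side with $f_j=2^{k_j}1_{S_j}$. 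Homogeneity then reduces us to $f_j=1_{S_j}$.
\item \emph{The set $Z$.} With $f_j=1_{S_j}$, the inner sum over $y$ counts $n(x)=\#\{y:(x,y)\in Z_0\}$, where $Z_0=\{(x,y):P_jx+Q_jy\in S_j\ \forall j\}$. The left side becomes $(\sum_x n(x)^{q/r})^{1/q}$.
\item \emph{Dyadic pigeonholing of $n(x)$.} Pigeonhole $n(x)$ dyadically into $\sim M$ with $M\le\min_j|S_j|$, using $O(\log)$ levels. Let $Z\subset Z_0$ consist of the fibers with $n(x)\sim M$. Condition (ii) holds, and (i) holds trivially.
\item \emph{Applying the hypothesis.} The restricted weak type estimate \eqref{restrictedweakineq} gives $\#\{x\}\,M\sim|Z|\lesssim M^{1-q/r}\prod_j|S_j|^{p_jq}$. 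Hence $\sum_{x} n(x)^{q/r}\sim \#\{x\}M^{q/r}\lesssim \prod_j|S_j|^{p_jq}$.
\end{itemize}
Taking $1/q$ powers and summing the $O(\log)$ levels yields $\mathrm{LHS}\lesssim(\log)^{O(1)}\prod_j\|1_{S_j}\|_{\ell^1}^{p_j}$.

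The main obstacle is bookkeeping the logarithmic losses so they grow only polynomially in $N$ under tensoring. Level-set pigeonholing of $f_j^{\otimes N}$ naively depends on the ratio $\max/\min$ of the nonzero values, which becomes $R^N$, and on the support size $|{\rm supp} f_j|^N$. Both enter only through logarithms, giving $O(N)$ levels, so the losses are $N^{O(1)}$, which is acceptable. The care needed is in the truncation of small values. I would avoid truncation altogether by assuming the $f_j$ take finitely many values, so $f_j^{\otimes N}$ has at most $(\#\text{values})^N$ values, and by pigeonholing exactly over values rather than dyadically. Equivalently, I would dyadically decompose with $\log_2 R^N+O(1)$ levels. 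The $n(x)$ levels are bounded by $\log|{\rm supp}|^N$. All losses are then $\mathrm{poly}(N)$, and the $N$-th root kills them.
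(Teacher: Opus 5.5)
Your proposal is correct and follows the paper's proof essentially step for step. Both arguments dyadically pigeonhole the values of the $f_j$ and the fibre sizes. You reduce to indicators first and pigeonhole $n(x)$, whereas the paper normalizes $f_j\ge 1$ on the support, pigeonholes $H_{c_1,\ldots,c_m}(x)$ directly, and controls every level count by $\log\|f\|_{total}$. Both then apply the restricted weak type hypothesis to get a polylogarithmic loss, and remove it by the tensor-power trick on $G^N$, which is where uniformity over torsion-free groups enters.

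One small slip: the count ``at most $(\#\text{values})^N$ values'' for $f_j^{\otimes N}$ would give an exponential loss that the $N$-th root does not remove. Your dyadic alternative with $N\log_2 R+O(1)$ levels does give a polynomial loss, as does the correct count $\binom{N+V-1}{V-1}$ of distinct values, where $V$ is the number of distinct nonzero values of $f_j$. So the argument goes through.
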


\begin{proof}
    The proof has two steps. First we do a dyadic pigeonholing for each $f_j$ and the following function (in $x$): $H(x) = \sum_{y \in G} \prod_{j=1}^m |f_j (P_j x + Q_j y)|^{p_j r}$ and use the restricted type estimate in the assumption to obtain a weaker version of \eqref{discreteBLineq} with a logarithm factor on the right hand side. Then we use a ``tensor product'' trick that was also used in \cite{katz1999bounds} to obtain the honest \eqref{discreteBLineq}.

    Throughout the proof, every implied constant is allowed to depend on constants $m, p_j, q$ and $r$ and we will not specify these dependencies.
    
    Without loss of generality, we always assume $f_j \geq 1$ at every point in its support. Denote the ``size parameter'' $\|f\|_{total}$ to be $\sum_{j=1}^m \|f_j\|_1$.
    
    Since each $f_j$ is finitely supported, there are only finitely many $x$ such that $H(x) \neq 0$.
    For each dyadic number $c_j \in [1, \|f\|_{total})$,  let
    $$f_{j, c_j} = f_j \cdot 1_{\{g \in G: f_j (g) \in [c_j, 2c_j) \}}$$
    and let $$S_{j, c_j} = \text{supp} f_{j, c_j}.$$ For $c_1, \ldots , c_m \in [1, \|f\|_{total})$, denote $$H_{c_1, \ldots , c_m} (x) = \sum_{y \in G} \prod_{j=1}^m |f_{j, c_j} (P_j x + Q_j y)|^{p_j r}.$$
    Now for another dyadic parameter $b \in [1, \|f\|_{total}^{r+m})$, denote
    $$X_{c_1, \ldots , c_m, b} = \{x\in G: b \leq H_{c_1, \ldots , c_m} (x)\leq 2b\}.$$
    Note that as long as $H_{c_1, \ldots , c_m} (x) \neq 0$, $x$ must belong to some $X_{c_1, \ldots , c_m, b}$ since $$H_{c_1, \ldots , c_m} (x) \leq H(x) \leq \prod_{j=1}^m\|f_j\|_1^{\max\{p_j r, 1\}}.$$
    Now for $c_1, \ldots , c_m$ all fixed,

\begin{eqnarray}\label{ineqwithcs}
    & (\sum_{x \in G} (\sum_{y \in G} \prod_{j=1}^m |f_{j, c_j} (P_j x + Q_j y)|^{p_j r})^{\frac{q}{r}})^{\frac{1}{q}}\nonumber\\
    =  & (\sum_b \sum_{x \in X_{c_1, \ldots, c_m, b}} (\sum_{y \in G} \prod_{j=1}^m |f_{j, c_j} (P_j x + Q_j y)|^{p_j r})^{\frac{q}{r}})^{\frac{1}{q}}\nonumber\\
    \lesssim & \log(\|f\|_{total}+1) \cdot \max_b (\sum_{x \in X_{c_1, \ldots , c_m, b}} (\sum_{y \in G} \prod_{j=1}^m |f_{j, c_j} (P_j x + Q_j y)|^{p_j r})^{\frac{q}{r}})^{\frac{1}{q}}\nonumber\\
    \lesssim & \log(\|f\|_{total}+1) \cdot \max_b b^{\frac{1}{r}} |X_{c_1, \ldots , c_m, b}|^{\frac{1}{q}}.
\end{eqnarray}

By the definition of $S_{j, c_j}$ and $X_{c_1, \ldots , c_m, b}$, we see for each element $x \in X_{c_1, \ldots , c_m, b}$, there are $\sim \frac{b}{\prod_{j=1}^m c_j^{p_j r}}$ different $y$ such that $P_j x + Q_j y \in S_{j, c_j}, \forall 1 \leq j \leq m$. By the restricted weak type counterpart assumption, $$\frac{b}{\prod_{j=1}^m c_j^{p_j r}}\cdot |X_{c_1, \ldots , c_m, b}| \leq \left(\frac{b}{\prod_{j=1}^m c_j^{p_j r}}\right)^{1-\frac{q}{r}} \prod_{j=1}^m |S_{j, c_j}|^{p_j q}$$ and thus $$|X_{c_1, \ldots , c_m, b}| \leq \left(\frac{b}{\prod_{j=1}^m c_j^{p_j r}}\right)^{-\frac{q}{r}} \prod_{j=1}^m |S_{j, c_j}|^{p_j q}.$$

Plugging this in \eqref{ineqwithcs} and note that $c_1 |S_{1, c_1}| \lesssim \|f_1\|_1$, etc., we obtain
\begin{eqnarray}\label{discreteBLineqwithcfixed}
    & (\sum_{x \in G} (\sum_{y \in G} \prod_{j=1}^m |f_{j, c_j} (P_j x + Q_j y)|^{p_j r})^{\frac{q}{r}})^{\frac{1}{q}}\nonumber\\  \lesssim & \log(\|f\|_{total}+1) \cdot \prod_{j=1}^m \|f_j\|_{l^1}^{p_j}.
\end{eqnarray}

Finally we do a sum over $\mathbf{c} = (c_1, \ldots, c_m)$. For short we write $L = \log(\|f\|_{total}+1)$.
\begin{eqnarray}\label{weakdiscreteBLineqprel}
    & (\sum_{x \in G} (\sum_{y \in G} \prod_{j=1}^m |f_{j} (P_j x + Q_j y)|^{p_j r})^{\frac{q}{r}})^{\frac{1}{q}}\nonumber\\\lesssim & (\sum_{x \in G} (L^m \max_{\mathbf{c}}\sum_{y \in G} \prod_{j=1}^m |f_{j, c_j} (P_j x + Q_j y)|^{p_j r})^{\frac{q}{r}})^{\frac{1}{q}}\nonumber\\
    = & L^{\frac{m}{r}}(\sum_{x \in G} (\max_{\mathbf{c}}\sum_{y \in G} \prod_{j=1}^m |f_{j, c_j} (P_j x + Q_j y)|^{p_j r})^{\frac{q}{r}})^{\frac{1}{q}}\nonumber\\
    \lesssim & L^{\frac{m}{r}}(L^m \max_{\mathbf{c}}\sum_{x \in G} (\sum_{y \in G} \prod_{j=1}^m |f_{j, c_j} (P_j x + Q_j y)|^{p_j r})^{\frac{q}{r}})^{\frac{1}{q}}\nonumber\\
    \lesssim & L^{m+1} \prod_{j=1}^m \|f_j\|_{l^1}^{p_j}.
\end{eqnarray}
where we used \eqref{discreteBLineqwithcfixed} in the last line. For convenience we rewrite \eqref{weakdiscreteBLineqprel} as
\begin{eqnarray}\label{weakdiscreteBLineq}
    & (\sum_{x \in G} (\sum_{y \in G} \prod_{j=1}^m |f_{j} (P_j x + Q_j y)|^{p_j r})^{\frac{q}{r}})^{\frac{1}{q}}\nonumber\\  \lesssim & \log(\|f\|_{total}+1)^{m+1}  \prod_{j=1}^m \|f_j\|_{l^1}^{p_j}.
\end{eqnarray}

To get rid of the logarithm factor in \eqref{weakdiscreteBLineq}, we use a tensor product trick. For a natural number $N$ that will be large, we define $F_{j, N} (1 \leq j \leq m)$ on the $N$-fold Cartesian product $G^N$ as: $$F_{j, N} (g_1, \ldots, g_N) = F_j (g_1) F_j (g_2) \cdots F_j (g_N).$$ By chasing definition,

\begin{eqnarray}
    & (\sum_{\mathbf{x} \in G^N} (\sum_{\mathbf{y} \in G^N} \prod_{j=1}^m |F_{j, N} (P_j \mathbf{x} + Q_j \mathbf{y})|^{p_j r})^{\frac{q}{r}})^{\frac{1}{q}}  \nonumber\\  = & \left((\sum_{x \in G} (\sum_{y \in G} \prod_{j=1}^m |f_{j} (P_j x + Q_j y)|^{p_j r})^{\frac{q}{r}})^{\frac{1}{q}}\right)^N.
\end{eqnarray}

Invoke \eqref{weakdiscreteBLineq} for $F_{j, N}$ in place of $f_j$, and note that  $\|F_N\|_{total}\leq \|f\|_{total}^N$, we see

\begin{eqnarray}
    & \left((\sum_{x \in G} (\sum_{y \in G} \prod_{j=1}^m |f_{j} (P_j x + Q_j y)|^{p_j r})^{\frac{q}{r}})^{\frac{1}{q}}\right)^N \nonumber\\  \lesssim & \left(N\log(\|f\|_{total}+1)\right)^{m+1} \cdot \left(\prod_{j=1}^m \|f_j\|_{l^1}^{p_j}\right)^N.
\end{eqnarray}

Let $N \to \infty$ (and keep all $f_{j}$), we deduce \eqref{discreteBLineq}. \end{proof}

\begin{thm}\label{transthm}[A transference principle]
    Suppose that for some choices of $m, p_j, q, r$, every torsion-free abelian group $G$ satisfies the restricted weak counterpart of \eqref{BLmix} with a uniform implied constant and all $B_j$ being rational projections in the form \eqref{expressionBj}. Then \eqref{BLmix} holds.
\end{thm}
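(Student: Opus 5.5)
We deduce \eqref{BLmix} on $\mathbb{R}^2$ from the discrete inequality \eqref{discreteBLineq} of Lemma \ref{Discretemainthm}, applied with $G=\Z$ (or $G=\frac{1}{K}\Z\cong\Z$), by discretization and a limiting argument. Since all $B_j$ are rational, after the linear change of variables implicit in \eqref{expressionBj} (the normalizing factors $\sqrt{P_j^2+Q_j^2}$ only change constants) it suffices to prove
\begin{equation*}
\Bigl(\int_{\mathbb{R}}\Bigl(\int_{\mathbb{R}}\prod_{j=1}^m f_j(P_jx+Q_jy)^{p_jr}\,\mathrm{d}y\Bigr)^{\frac{q}{r}}\mathrm{d}x\Bigr)^{\frac1q}\lesssim\prod_{j=1}^m\|f_j\|_{L^1}^{p_j}.
\end{equation*}
By monotone convergence and standard density we may assume each $f_j$ is a finite nonnegative combination of characteristic functions of intervals $[k/K,(k+1)/K)$ for some large integer $K$. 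By the scaling $x\mapsto Kx$, $y\mapsto Ky$, which both sides respect thanks to \eqref{assumptionofqandr} and \eqref{scalingmix}, we may take $K=1$: each $f_j$ is constant on unit intervals $[k,k+1)$, $f_j(t)=a_j(\lfloor t\rfloor)$ with $a_j:\Z\to\mathbb{R}_{\ge0}$ finitely supported and $\|a_j\|_{l^1}=\|f_j\|_{L^1}$.

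The key step is to dominate the continuous integrand by a discrete one. Write $x=u+s$ and $y=v+t$ with $u,v\in\Z$ and $s,t\in[0,1)$. Then $P_jx+Q_jy\in P_ju+Q_jv+[-C,C]$ with $C=\max_j(|P_j|+|Q_j|)$, so
\begin{equation*}
f_j(P_jx+Q_jy)\le \tilde a_j(P_ju+Q_jv),\qquad \tilde a_j(n):=\max_{|i|\le C}a_j(n+i).
\end{equation*}
Since $\tilde a_j\le\sum_{|i|\le C}a_j(\cdot+i)$, we have $\|\tilde a_j\|_{l^1}\le(2C+1)\|a_j\|_{l^1}$. Integrating over $s,t\in[0,1)$ gives
\begin{equation*}
\int_{\mathbb{R}}\prod_j f_j(P_jx+Q_jy)^{p_jr}\,\mathrm{d}y\le\sum_{v\in\Z}\prod_j\tilde a_j(P_ju+Q_jv)^{p_jr}\quad\text{for } x\in[u,u+1).
\end{equation*}
Raising to the power $q/r$ and integrating in $x$ bounds the left side by the left side of \eqref{discreteBLineq} for the $\tilde a_j$ on $G=\Z$. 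Lemma \ref{Discretemainthm} then gives the bound $\prod_j\|\tilde a_j\|_{l^1}^{p_j}\lesssim\prod_j\|f_j\|_{L^1}^{p_j}$. Lemma \ref{Discretemainthm} applies because $\Z$ is torsion-free and the hypothesis of the theorem is exactly the hypothesis of that lemma.

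The main obstacle is the reduction to step functions at a common scale in a way that is compatible with the scaling. The cleanest route is as follows. First approximate general $f_j\in L^1$ from below, monotonically, by simple functions built from dyadic intervals $[k2^{-\ell},(k+1)2^{-\ell})$. Then rescale by $2^{\ell}$, checking the homogeneity: the left side scales as $2^{-\ell(1/q+1/r)}=2^{-\ell}$ and the right side as $2^{-\ell\sum p_j}=2^{-\ell}$. Finally pass to the limit $\ell\to\infty$ by monotone convergence. This yields \eqref{BLmix} for all nonnegative $f_j$.
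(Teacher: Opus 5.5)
Your discretization is correct and follows the paper's route. The paper also takes $G=\Z$ in \eqref{discreteBLineq}, dominates unit-scale step functions by enlarged ones (your $\tilde a_j(n)=\max_{|i|\le C}a_j(n+i)$ plays the role of the paper's $\sum_k 1_{CI_k}$), and rescales using \eqref{assumptionofqandr} and \eqref{scalingmix}.

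The step that fails is the last one. A general nonnegative $f\in L^1(\mathbb{R})$ cannot be approximated monotonically from below by finite combinations of indicators of dyadic intervals. Take $A\subset\mathbb{R}$ measurable, meeting every interval in positive but not full measure. Then any such step function $g\le 1_A$ a.e.\ vanishes a.e., since on an interval $I$ where $g=c>0$ you would need $|I\setminus A|=0$. So the monotone limit of your approximants is $0$ rather than $1_A$, and monotone convergence never reaches $f=1_A$. Density in $L^1$ alone does not rescue this, because you still need some limit theorem to pass the left side of \eqref{BLmix} to the limit.

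The repair is the paper's ``approximation and Fatou's lemma''. Take $f_j^{(\ell)}=(\mathbb{E}_\ell f_j)\,1_{[-2^\ell,2^\ell)}$, where $\mathbb{E}_\ell$ averages over dyadic intervals of length $2^{-\ell}$. These are admissible step functions, $\|f_j^{(\ell)}\|_{L^1}\le\|f_j\|_{L^1}$, and $f_j^{(\ell)}\to f_j$ a.e.\ by the dyadic Lebesgue differentiation theorem. The preimage under $B_j$ of a null subset of $\mathbb{R}$ is null in $\mathbb{R}^2$, so the integrands $\prod_j f_j^{(\ell)}(B_j(x,y))^{p_j}$ converge a.e.\ on $\mathbb{R}^2$. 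Apply Fatou in $y$ (for a.e.\ $x$, by Fubini) and then in $x$. This bounds the left side of \eqref{BLmix} for $f$ by the $\liminf$ of the left sides for $f^{(\ell)}$, hence by $\lesssim\prod_j\|f_j\|_{L^1}^{p_j}$.

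Alternatively, you can keep a monotone scheme. First replace $f_j$ by a lower semicontinuous majorant $g_j\ge f_j$ with $\|g_j-f_j\|_{L^1}<\varepsilon$ (Vitali--Carath\'eodory). For such $g_j$ the truncated dyadic infima do increase to $g_j$ everywhere, and the left side of \eqref{BLmix} is monotone in each $f_j$.
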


\begin{proof}
    Take $G=\mathbb{Z}$ in \eqref{discreteBLineq}, and we see that \eqref{BLmix} holds where each of $f_j, 1 \leq j \leq m$ takes form of a finite sum  $\sum_k a_k 1_{I_k}$ where $I_k$ are disjoint intervals of length $1$ (when we apply \eqref{discreteBLineq}, we can plug in functions $\sum_k 1_{CI_k}$ for large $C$). By rescaling all $f_j$ simultaneously, \eqref{mainthmBLineq} also holds when each $f_j$ is a finite linear combination of characteristic functions of disjoint intervals of equal length. Now \eqref{mainthmBLineq} follows from approximation and Fatou's lemma.
\end{proof}

\section{The restricted weak type estimate for \eqref{mainthmBLineq}}\label{restrictedweak1sec}

As an example of applying Theorem \ref{transthm}, we use it to prove Theorem \ref{mixedBLineqforR} based on a restricted weak type estimate (Lemma \ref{restrictedweak} below). Lemma \ref{restrictedweak} already exists in the literature (see Tao's unpublished note \cite{Taoentropy} and Remark \ref{Greenrem} below), and we include a proof here for self-containment. It is obtained in the same way as \cite[(3)]{christ2001certain} was obtained, and is a stronger version of the following Proposition \ref{restrictedweaksp}  obtained in Katz-Tao's work \cite{katz1999bounds} and their proofs share much in common.
One key point of the present paper is that we further show that the lemma can be upgraded to its endpoint version (Theorem \ref{mixedBLineqforR}) by Theorem \ref{transthm}.

We first recall
\begin{prop}[Katz-Tao, (6) in \cite{katz1999bounds}]\label{restrictedweaksp}
Let $G$ be an abelian group without order $2$ elements. For finite  $S_1, S_2, S_3, S_4 \subset G$ and a finite  $Z \subset G^2$ with every two elements in $S$ having different first coordinates, we have
\begin{equation}
    |\{(x, y) \in Z: x+y \in S_1, -x+y \in S_2, y \in S_3 \text{ and } x+3y \in S_4\}| \leq |S_1|^{\frac{1}{2}}|S_2|^{\frac{1}{2}}|S_3|^{\frac{1}{2}}|S_4|^{\frac{1}{4}}.
\end{equation}
\end{prop}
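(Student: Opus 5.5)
The plan is to bound $N:=|Z'|$, where $Z'\subset Z$ is the set of points satisfying all four membership conditions. I write $a=x+y$, $b=-x+y$, $c=y$, $d=x+3y$ for the four forms. I would combine two ``Cauchy--Schwarz plus injectivity'' estimates multiplicatively. The target $N\le |S_1|^{1/2}|S_2|^{1/2}|S_3|^{1/2}|S_4|^{1/4}$ is equivalent to $N^4\le |S_1|^2|S_2|^2|S_3|^2|S_4|$. So it would suffice to prove
\[
N^2\le |S_1||S_2||S_3||S_4| \quad\text{and}\quad N^2\le |S_1||S_2||S_3|,
\]
and multiply the two.

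\emph{First bound.} Group the points of $Z'$ by the value of $b\in S_2$. By Cauchy--Schwarz, the number $T$ of ordered pairs $(p,p')\in Z'^2$ with $b(p)=b(p')$ satisfies $T\ge N^2/|S_2|$. Such a pair is described by three free parameters $(x,y,x')$, with $y'=y-x+x'$. I map it to $(a(p),c(p'),d(p'))\in S_1\times S_3\times S_4$. In the coordinates $(x,y,x')$ these forms are $(1,1,0)$, $(-1,1,1)$ and $(-3,3,4)$, and the determinant of this system is $2$. Since $G$ has no elements of order $2$, the map is injective. Hence $T\le|S_1||S_3||S_4|$, which gives the first bound. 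This step is routine.

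\emph{Second bound.} This is where the hypothesis that distinct points of $Z$ have distinct first coordinates must enter. That hypothesis makes $Z'$ the graph of a partial function $x\mapsto y$, so a point is determined by its $x$-coordinate alone. I would again count pairs sharing one of the forms, say $b$ (or $c$), and find a map into only two of the sets, such as $(a(p),c(p'))\in S_1\times S_3$. The graph property should supply the missing third constraint: from $a(p)$ and $c(p')$ one would try to recover $x$, and then $p$ via the graph, and then $p'$ via $b(p')=b(p)$ and $c(p')$. Failing a direct injection, I would aim for a bounded-multiplicity version, or an iterated Cauchy--Schwarz in the spirit of Katz--Tao's sum--difference arguments, in which one of the three parameters is eliminated by the graph condition.

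I expect this second step to be the main obstacle. The naive fibre bounds it replaces give only $N^2\le|S_1||S_2||S_3|^2$, by counting, for each $y\in S_3$, at most $\min(|S_1|,|S_2|)$ points. The example $Z'=\{(x,0):x\in A\}$ shows that $N^2\le|S_1||S_2||S_3|$ would be sharp, so a genuinely new input from the graph structure is needed. If this splitting does not work, the fallback is to follow Katz--Tao's original route for (6) in \cite{katz1999bounds} directly. That route would mean running a two-level Cauchy--Schwarz on quadruples of points linked alternately by equal $b$- and equal $a$-values. The graph condition would then be used to cut the parameter count of the quadruples from four to three, so that they inject into $S_1\times S_3\times S_4$ with the $S_4$ factor appearing only once after the final square root.
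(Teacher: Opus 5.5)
\textbf{There is a genuine gap: your second bound is false, so the splitting cannot work.} Your first bound is correct; the determinant-$2$ injection does not even need the graph hypothesis. But the second bound $N^2\le|S_1||S_2||S_3|$ fails. In $G=\mathbb{Z}$ take $Z'=\{(\pm1,1),(\pm10,10),(\pm9,11)\}$. The first coordinates are distinct, $\{x+y\}=\{-x+y\}=\{0,2,20\}$ and $\{y\}=\{1,10,11\}$, yet $N^2=36>27$.

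This is Ruzsa's sum--difference example in disguise: $x=u-v$, $y=u+v$ for $u\neq v$ in $\{0,1,10\}$. Its tensor powers in $\mathbb{Z}^n$ give $N=(|S_1||S_2||S_3|)^{\log 6/\log 27}$, and $\log 6/\log 27>1/2$. With only the three forms $x+y$, $-x+y$, $y$, your graph hypothesis is exactly the sum--difference setting: edges $(x+y,-x+y)$ with distinct differences $2x$ and few sums $2y$. The example shows that exponent $3/2$ is out of reach there, so no cleverer use of the graph condition can rescue this step. The form $x+3y$ has to enter inside a single iterated Cauchy--Schwarz, not be split off into a separate factor.

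\textbf{Your fallback is too vague to count as a proof, and its natural reading does not close.} The working configuration is the paper's proof of Lemma~\ref{restrictedweak} with $m=1$, and it doubles $a=x+y$. There are $\ge N^2/|S_1|$ pairs with $a(z_1)=a(z_2)$. Cauchy--Schwarz applied to $(z_1,z_2)\mapsto(b(z_1),c(z_2))\in S_2\times S_3$ then gives $\ge N^4/(|S_1|^2|S_2||S_3|)$ quadruples with $a(z_1)=a(z_2)$, $a(z_3)=a(z_4)$, $b(z_1)=b(z_3)$, $c(z_2)=c(z_4)$. On these quadruples one has $2x_1=-b(z_2)-2c(z_3)+d(z_4)$. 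So $(b(z_2),c(z_3),d(z_4))\in S_2\times S_3\times S_4$ determines $x_1$ (no $2$-torsion), hence $z_1$ by the graph hypothesis (its only use), and then $z_2,z_3,z_4$ in turn. The count is therefore $\le |S_2||S_3||S_4|$, which is the claim.

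Suppose instead you double $b$ and aim at $S_1\times S_3\times S_4$, as your fallback suggests. That means quadruples with $b(z_1)=b(z_2)$, $b(z_3)=b(z_4)$, $a(z_1)=a(z_3)$, $c(z_2)=c(z_4)$; their count $\ge N^4/(|S_1||S_2|^2|S_3|)$ would also suffice numerically. But the analogous identity is $2x_1=a(z_2)+2c(z_3)+(x_4-3y_4)$. A direct check shows that no choice of one $a$-value, one $c$-value and one $d$-value, together with the graph hypothesis, pins down such a quadruple. The sign in $x+3y$ breaks the $x\mapsto -x$ symmetry and forces which form is doubled.
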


We remark that the form of Katz-Tao's theorem is slightly different. But their conclusion can be  seen equivalent via a linear transform.

We show next that Katz-Tao's proof of Proposition \ref{restrictedweaksp} leads to a stronger restricted weak type mixed-norm estimate. We set up some notations first: For $(a_1, a_2) \in G^2$, define the projection $\pi_{a_1, a_2}: G^2 \to G$ as $$\pi_{a_1, a_2} (x, y) = a_1 x + a_2 y.$$

\begin{lem}\label{restrictedweak}
Let $G$ be an abelian group without order $2$ elements and $S_1, S_2, S_3, S_4 \subset G$ be finite subsets. If a finite subset $Z \subset G^2$ satisfies (i) $\pi_{1, 1} (z) \in S_1$, $\pi_{-1, 1} (z) \in S_2$, $\pi_{0,1} (z) \in S_3$ and $\pi_{1, 3} (z) \in S_4$, $\forall z\in Z$ and (ii) every element in $\pi_{1, 0} (Z)$ has  $\leq m$ preimages in $Z$ with respect to the map $\pi_{1, 0} (Z)$. Then
\begin{equation}\label{restrictedweakineq}
    |Z| \leq m^{\frac{1}{4}}|S_1|^{\frac{1}{2}}|S_2|^{\frac{1}{2}}|S_3|^{\frac{1}{2}}|S_4|^{\frac{1}{4}}.
\end{equation}
\end{lem}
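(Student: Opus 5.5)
The plan is to follow the Katz--Tao double-counting scheme behind Proposition \ref{restrictedweaksp}, with the hypothesis (ii) entering through one extra Cauchy--Schwarz. For $z=(x,y)\in Z$ write $a=x+y\in S_1$, $b=-x+y\in S_2$, $c=y\in S_3$, $d=x+3y\in S_4$. These satisfy the linear relations
\[
a+b=2c, \qquad d=2a+b .
\]
Since $G$ has no elements of order $2$, the map $y\mapsto 2y$ is injective. Hence any identity of the form $2y=\text{(known)}$ determines $y$ uniquely. This is the only place the torsion assumption is used. I will repeatedly use that $z$ is determined by any two of $a,b,c,d$ whose coefficient determinant is a power of $2$, which holds for every pair here.

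\emph{Step 1: pairs sharing $y$.} Let $P=\{(z,z')\in Z^2:\ \pi_{0,1}(z)=\pi_{0,1}(z')\}$. Cauchy--Schwarz over the fibres of $\pi_{0,1}$, which take at most $|S_3|$ values, gives
\[
|Z|^2\le |S_3|\,|P| .
\]
So \eqref{restrictedweakineq} follows once I show $|P|^2\le m\,|S_1|^2|S_2|^2|S_4|$.

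\emph{Step 2: parametrise $P$.} For $(z,z')\in P$ with $z=(x,y)$ and $z'=(x',y)$, record $a=x+y\in S_1$ and $b'=-x'+y\in S_2$. Given $(a,b')$, the pair is determined by the common value $y$, since $x=a-y$ and $x'=y-b'$. Let $F(a,b')$ be the number of admissible $y$, so that $|P|=\sum_{(a,b')\in S_1\times S_2}F(a,b')$. A first Cauchy--Schwarz gives
\[
|P|^2\le |S_1||S_2|\sum F(a,b')^2 .
\]
It therefore suffices to prove $\sum F^2\le m\,|S_1||S_2||S_4|$.

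\emph{Step 3: the main obstacle.} The sum $\sum F^2$ counts configurations $(a,b',y_1,y_2)$. Each gives four points of $Z$:
\[
(a-y_i,\,y_i),\qquad (y_i-b',\,y_i),\qquad i=1,2 .
\]
I need to encode such a configuration by data ranging over a set of size at most $m|S_1||S_2||S_4|$. The first thing I will try is the following encoding.
\begin{itemize}
\item Use $d_1=a+2y_1\in S_4$; given $a$, this determines $y_1$ by the absence of $2$-torsion.
\item Use $b'\in S_2$.
\item For $y_2$, note that $(a-y_1,y_1)$ and $(a-y_2,y_2)$ are both in $Z$ with the same $\pi_{1,1}$-value. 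I instead want to exploit a pair of points sharing their first coordinate, which has at most $m$ choices by (ii). The candidate is to pass to the point with first coordinate $y_1-b'$, which is already fixed once $y_1,b'$ are known, and let $y_2$ be recovered from a choice among its $\le m$ preimages.
\end{itemize}
This requires rearranging which of the four points is used to certify the shared $x$-coordinate, possibly replacing $(a,b')$ by $(a,d')$ in Step 2.

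Getting this bookkeeping to close with exactly one factor of $m$ and one factor of $|S_4|$ is where I expect the real work to be. If the direct encoding fails, I will fall back on an alternative. In it, I first apply (ii) through the bound $\#\{(z,z''):\pi_{1,0}z=\pi_{1,0}z''\}\le m|Z|$. I then use the $d=2a+b$ relation to interpolate between this bound and the trivial injections $P\hookrightarrow Z\times S_1$, $P\hookrightarrow Z\times S_2$ and $P\hookrightarrow Z\times S_4$. Finally, I solve for $|Z|$.

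Combining Steps 1--3 yields $|Z|^4\le m\,|S_1|^2|S_2|^2|S_3|^2|S_4|$, which is \eqref{restrictedweakineq}.
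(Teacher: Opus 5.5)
Your Steps 1 and 2 are correct. They reorder the paper's double counting: the paper collapses first along $\pi_{1,1}$ and then along $(\pi_{-1,1},\pi_{0,1})$, while you collapse along $\pi_{0,1}$ and then along $(\pi_{1,1},\pi_{-1,1})$. So the lemma does reduce to $\sum F^2\le m|S_1||S_2||S_4|$. But that inequality is the whole content of the lemma, and Step 3 does not prove it.

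The encoding you propose fails. Recording $(a,b',d_1)$ with $d_1=a+2y_1$ fixes $a,b',y_1$. This already determines both points $(a-y_1,y_1)$ and $(y_1-b',y_1)$, so choosing among the $\le m$ points of $Z$ over $x=y_1-b'$ yields nothing new. The only remaining unknown is $y_2$. The points carrying it, $(a-y_2,y_2)$ and $(y_2-b',y_2)$, have first coordinates that depend on $y_2$ itself, so (ii) gives no control. You only get $\sum F^2\le |S_1||S_2||S_4|\max F$, and $F$ is not controlled by $m$.

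This is not a bookkeeping issue. A short linear-algebra check shows that, for your configuration, no encoding that uses one value each from $S_1,S_2,S_4$ plus one use of (ii) can succeed if it records the shared values $a$ or $b'$. The fallback (interpolating $m|Z|$ against the trivial bounds $|P|\le |Z||S_j|$) is not an argument as stated.

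The missing idea is the analogue of the paper's Claims I and II: record only \emph{non-shared} projections, and find a linear identity recovering one point's first coordinate from them. Write $w_1=(a-y_1,y_1)$, $w_2=(y_1-b',y_1)$, $w_3=(a-y_2,y_2)$, $w_4=(y_2-b',y_2)$. Then
\[
2\,\pi_{1,0}(w_1)=-\pi_{1,1}(w_2)-2\,\pi_{-1,1}(w_3)+\pi_{1,3}(w_4),
\]
since $-(2y_1-b')-2(2y_2-a)+(4y_2-b')=2(a-y_1)$. So the triple $(\pi_{1,1}(w_2),\pi_{-1,1}(w_3),\pi_{1,3}(w_4))\in S_1\times S_2\times S_4$ determines $\pi_{1,0}(w_1)$, using the absence of $2$-torsion. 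Hypothesis (ii) then leaves at most $m$ choices for $w_1$, which gives $y_1$ and $a$. Finally $b'=2y_1-\pi_{1,1}(w_2)$ and $2y_2=\pi_{-1,1}(w_3)+a$ recover everything. This gives $\sum F^2\le m|S_1||S_2||S_4|$ and completes your argument; without it, the proposal does not establish the lemma.
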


\begin{proof}
This proof is not too different from the proof of \cite[(3)]{christ2001certain} and has a lot of similarities with The proof of \cite[(6)]{katz1999bounds}. For many steps, we will only sketch those and refer the readers to the counterpart in these papers.

First, we see there are $\geq \frac{|Z|^2}{|S_1|}$ pairs $(z_1, z_2)$ such that $\pi_{1, 1} (z_1) = \pi_{1, 1} (z_2)$ like how (13) is deduced in \cite{katz1999bounds} (from their Lemma 2.1). Applying Lemma 2.1 in \cite{katz1999bounds} again just like the deduction of (18) in \cite{katz1999bounds}, we see there are $\geq \frac{|Z|^4}{|S_1|^2 |S_2| |S_3|}$ tuples $(z_1, z_2, z_3, z_4)$ such that $$\pi_{1, 1} (z_1) = \pi_{1, 1} (z_2), \pi_{1, 1} (z_3) = \pi_{1, 1} (z_4), \pi_{-1, 1} (z_1) = \pi_{-1, 1} (z_3) \text{ and } \pi_{0, 1} (z_2) = \pi_{0, 1} (z_4).$$

For each $4-$tuple $(z_1, z_2, z_3, z_4)$ obtained above, we name their coordinates as $z_1 = (x_1, y_1)$, etc. Now we claim two things:

\textbf{Claim I:} For each such tuple, their $\pi_{1, 0} (z_1)$ is uniquely determined by $\pi_{-1, 1} (z_2), \pi_{0, 1} (z_3)$ and $\pi_{1, 3} (z_4)$.

Indeed, knowing all the latter three, we can deduce 
\begin{eqnarray}
2x_1 = & (x_1 + y_1) - (-x_1 + y_1)\nonumber\\
= & (x_2 + y_2) - (-x_3 + y_3)\nonumber\\
= & -(-x_2 + y_2) - 2y_3 + (2y_2) + (x_3 + y_3)\nonumber\\
= & -(-x_2 + y_2) - 2y_3 + (2y_4) + (x_4 + y_4)\nonumber\\
= & -(-x_2 + y_2) - 2y_3 + (x_4 + 3y_4)
\end{eqnarray}
and divide it by $2$ we know $x_1 =\pi_{1, 0} (z_1)$.

\textbf{Claim II:} Knowing $\pi_{-1, 1} (z_2), \pi_{0, 1} (z_3)$ and $\pi_{1, 3} (z_4)$ and $z_1$ will uniquely determine such a tuple.

Indeed, since we know $x_1 + y_1 = x_2 + y_2$ and also $x_2 - y_2$, we know $z_2$. Since we know $x_1 - y_1 = x_3 - y_3$ and also $y_3$, we know $z_3$. Finally, these two allow us to know $y_4 = y_2$ and $x_4 + y_4 = x_3 + y_3$ and hence $z_4$.

By the two claims, we know the number of all such $4$-tuples is $\leq m |S_2| |S_3| |S_4|$ (since knowing $\pi_{-1, 1} (z_2), \pi_{0, 1} (z_3)$ and $\pi_{1, 3} (z_4)$ enables us know $\pi_{1, 0} (z_1)$, thus limiting the choice of $z_1$ to $\leq m$ possibilities by assumption). Combining this upper bound with the lower bound we got earlier gives the lemma.
\end{proof}

\begin{rem}\label{Greenrem}
This lemma exists in the literature and follows from the entropy inequality in \cite[Proposition 3.5]{Taoentropy} where an entropy version of sharpened Proposition \ref{restrictedweaksp} was obtained.  A similar sharpening can be found in (4) in \cite{pohoata2024generalized}. \cite{pohoata2024generalized} also proves abstractly that a general class of sharpenings of entropy inequalities are equivalent to their weaker forms, and comments that many known methods that prove  weaker forms also automatically prove  sharpenings.

We also comment that the $G=\Z$ case in Lemma \ref{restrictedweak} corresponds to an interesting discrete inequality.
\end{rem}

\begin{proof}[Proof of Theorem \ref{mixedBLineqforR}]
    This is a direct combination of Theorem \ref{transthm} and Lemma \ref{restrictedweak} which verifies the restricted weak type estimate.
\end{proof}

\section{
Connecting mixed-norm Brascamp-Lieb in $\Bbb{R}^2$ to Kakeya in $\Bbb{R}^d$}\label{relateto Kakeyasection}

The inequality \eqref{BLmix} has connections to the \emph{Kakeya Conjectures}. We explain this connection and make an interesting remark about a failed attempt to apply recent progress on Brascamp-Lieb to \eqref{BLmix} in this section. We first introduce the Kakeya Conjectures.

\subsection{Brief introduction to Kakeya}

On a high level, the Kakeya Conjectures assert the tubes in different directions in $\Bbb{R}^d$ cannot overlap unexpectedly much. Different forms of those conjectures then have different ways to quantify this. We introduce one  below.

\begin{defi}[Kakeya set]
    A compact set $E \subset \Bbb{R}^d$ is called a \emph{Kakeya set} if $E$ contains a unit line segment in every direction.
\end{defi}

\begin{conj}[Kakeya Conjecture, Hausdorff version]\label{KakeyaH}
     Every Kakeya set in $\Bbb{R}^d$ has Hausdorff dimension $d$.
\end{conj}

Kakeya Conjectures remain open in dimensions $d \geq 4$. They are difficult geometric measure theory problems themselves, but are surprisingly important also in modern Fourier analysis. For a good introduction of the conjectures and the above connection, see \cite{tao2001rotating}. For some recent progress on the Kakeya Conjectures, see e.g. \cite{katz2019improved, zahl2021new, hickman2022improved,  arsovski2024p} and the notable resolution of Conjecture \ref{KakeyaH} in dimension $3$ \cite{wang2022sticky, wang2025assouad, wang2025volume}.

\subsection{The connection to \eqref{BLmix}}

There is a natural connection between the mixed-norm Brascamp-Lieb inequality \eqref{BLmix} and the Kakeya Conjectures in the  $q<2$ regime. Before carefully explaining it, let us remark that the arithmetic projection technique in \cite{katz1999bounds} that we used to prove \eqref{mainthmBLineq} was a further development following Bourgain's work \cite{bourgain1999dimension} to use additive combinatorics to improve on the high-dimensional Kakeya Conjectures. This technique was developed further later in \cite{katz2002new}. It is conceivable that more results in \cite{katz1999bounds, katz2002new} can be upgraded to prove new cases of \eqref{BLmix}, just like what we did to prove \eqref{mainthmBLineq}. 

We do not try to prove more cases of \eqref{BLmix} in this paper. We chose to present  and to prove the current Theorem \ref{mixedBLineqforR} here just because it is technically easiest. Instead, we turn to a slightly different perspective: There is an abstract connection between \eqref{BLmix} for $q<2$ and Conjecture \ref{KakeyaH} (in \emph{arbitrary dimension} $d$). We now state and prove this connection.

\begin{thm}\label{BLmiximplieskakeya}
    Let $q \in (1, 2)$. Suppose there exist $\{H_j\}$ being rational subspaces of $\mathbb{R}^2$ (not equal to $x$-axis) and $p_j$ such that \eqref{BLmix} holds for this $q$ with $\frac1q+\frac1r=1$, then the Hausdorff dimension of every Kakeya set in every $\Bbb{R}^d$ is at least $\frac{d-1}{q}+1$.
\end{thm}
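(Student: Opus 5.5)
Following the framework of Bourgain \cite{bourgain1999dimension} and Katz--Tao \cite{katz1999bounds}, I first convert the continuous inequality \eqref{BLmix} into a discrete statement over $G=\Z$, and then use it to bound the size of a slice arrangement coming from a Kakeya set. Since all $H_j$ are rational, write $B_j$ as in \eqref{expressionBj}. Applying \eqref{BLmix} to $f_j=1_{S_j+[0,1]}$ for finite $S_j\subset\Z$ (after a harmless rescaling) gives a discrete inequality. Specialising to indicator functions, it yields: if $Z\subset\Z^2$ satisfies $P_jz_1+Q_jz_2\in S_j$ for all $j$, and every nonempty fibre $\{y:(x,y)\in Z\}$ has size $\sim M$, then
$$|Z|\lesssim M^{1-\frac qr}\prod_j|S_j|^{p_jq}.$$
The argument is insensitive to integer dilations, and I will also need it at scale $\delta$. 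Normalising so that all $|S_j|\le N$ and using $\sum p_j=1$, this reads $|Z|\lesssim M^{1-q/r}N^{q}$. The usual trade-off against the trivial bound $|Z|\le N\cdot M$ then gives the key arithmetic estimate.

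The Kakeya reduction runs as follows. Suppose $E\subset\Bbb{R}^d$ is Kakeya with $\dim_H E<\frac{d-1}{q}+1$. By the standard Hausdorff-dimension reduction (Bourgain's argument, worked out in Appendix \ref{HKakeyasec}), I obtain a scale $\delta$ and a family of $\gtrsim\delta^{-(d-1)}/\log$ $\delta$-tubes in $\delta$-separated directions. Pigeonholing over dyadic scales, their union $F$ has $|F|\lesssim \delta^{d-\alpha-\epsilon}$ with $\alpha<\frac{d-1}{q}+1$. The problem is that the Hausdorff setting only gives a covering by balls of varying radii; Appendix \ref{HKakeyasec} is exactly where one extracts a single scale with a positive proportion of each tube. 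I take that as given.

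Next I pick finitely many parallel hyperplanes $x_d=t_j$ at positions chosen in rational proportion to $(P_j,Q_j)$; concretely, the slices at heights $0,1$ and the $t_j$ determined by the ratios $Q_j/(P_j+Q_j)$, after an affine change. Each tube is parametrised by its intersection points with two reference slices $a,b\in(\delta\Z)^{d-1}$. It then meets slice $j$ at a point proportional to $P_ja+Q_jb$. This is exactly the projection pattern, with $G=(\delta\Z)^{d-1}\cong\Z^{d-1}$, and the Definition \ref{restrictedweakgeneral} setup permits torsion-free $G$. Let $S_j$ be the $\delta$-cells of $F$ on slice $j$; by Fubini, $|S_j|\lesssim\delta^{-(\alpha-1)-\epsilon}$ for good slice choices, up to pigeonholing. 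The set $Z$ of pairs $(a,b)$ has size $\gtrsim\delta^{-(d-1)}$, since the directions are distinct. I need fibres over $a$ to have comparable size $M$, which I get by dyadic pigeonholing; each fibre has $M\lesssim\delta^{-(d-1)}$. The continuous-to-discrete passage should be handled by the remark that for rational $B_j$, \eqref{BLmix} on $\Bbb{R}^2$ implies the lattice version for $\Z^{d-1}$. I would argue this by tensoring or by applying it coordinatewise, using a generic linear map $\Z^{d-1}\to\Z$ that is injective on the finite configuration (Freiman isomorphism).

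Plugging in gives $|Z|\lesssim M^{1-q/r}N^q$ with $N=\delta^{-(\alpha-1)}$, $|Z|\gtrsim\delta^{-(d-1)}$ and $|Z|\le |\pi_{1,0}Z|\,M\le NM$. Eliminating $M$ is the crux. We have $\frac qr=q-1$, so $1-\frac qr=2-q$. From $|Z|/N\le M$ and $|Z|\lesssim M^{2-q}N^q$ with $M\le \delta^{-(d-1)}$, the bound $M\le\delta^{-(d-1)}$ yields $\delta^{-(d-1)}\lesssim\delta^{-(d-1)(2-q)}\delta^{-q(\alpha-1)}$. Hence $(d-1)(q-1)\le q(\alpha-1)$, which is weaker than wanted. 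So the fibre count must instead be used more cleverly: restrict to $a$ ranging over one slice, where directions force $M$ essentially equal to the number of tubes through a point. The main obstacle is exactly this exponent bookkeeping together with the single-scale Hausdorff reduction. The paper's mention of quantitative Szemerédi suggests the intended route is different. One takes $d'$ slices in arithmetic progression, uses Szemerédi bounds to find many tubes hitting $F$ densely in a long progression of slices, and then applies \eqref{BLmix} with $M$ tuned to $q\to1$. I would try that route first and fix the exponent to $\frac{d-1}{q}+1$ by optimising the choice of the slices.
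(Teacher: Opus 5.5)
There is a genuine gap, and it is the exponent problem you hit yourself: you have put the wrong map in the role of the outer variable $x$ of \eqref{BLmix}. With $x=a$ (the tube's position on a reference slice), $|\pi_{1,0}(Z)|\le N$ is essentially trivial, and the inequality only compares slices with slices. The $\delta$-separation of directions enters only through $|Z|\gtrsim\delta^{-(d-1)}$. No refinement of the bookkeeping in $M$ can recover this, which is why you reach only $\alpha\ge 1+(d-1)(1-\frac1q)$.

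The missing idea is that the direction map must be the outer coordinate. Put $y=a$ and $x=b-a$, so the tube meets the slice at height $t$ at $y+tx$. A rational $B_j$ with $B_j(x,y)\propto c_jx+d_jy$ is then a dilate of the slice map at the rational height $c_j/d_j$. The condition $d_j\neq0$ this needs is exactly the hypothesis that $H_j$ is not the $x$-axis; in your coordinates you would instead need $P_j+Q_j\neq0$, which the hypothesis does not give.

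Now take a finite $S\subset\mathbb{Z}^2$ whose images $S_j=\{c_jx+d_jy:(x,y)\in S\}$ all have size $\le N$. Test \eqref{BLmix} with $f_j$ a sum of indicators of short intervals around the points of $S_j$. Every $x$ in the projection $\pi_x(S)$ of $S$ to the first coordinate already contributes $\int_y\gtrsim1$. Hence $|\pi_x(S)|^{1/q}\lesssim\prod_j\|f_j\|_{L^1}^{p_j}\lesssim N$, i.e.\ $|\pi_x(S)|\lesssim N^q$, with no fibre condition and no $M$ at all. After the unimodular change $g_1=x+y$, $g_2=y$, this is precisely the paper's property $SD(q)$ for $\mathbb{Z}$: each resulting projection has coefficient sum $d_j\neq0$, and $x$ becomes $\pi_{(1,-1)}$. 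Transfer to $\mathbb{R}^{d-1}$ by a Freiman-type embedding, as you suggest. Then the $\gtrsim\delta^{-(d-1)}$ distinct values of $b-a$, against $N\lesssim\delta^{-(\alpha-1)}$ cells per good slice, give $d-1\le q(\alpha-1)$, which is exactly $\alpha\ge\frac{d-1}{q}+1$.

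Your account of where quantitative Szemer\'edi enters is also off, so the ``intended route'' you sketch at the end (tuning $M$, tubes dense along a progression of slices) is not the mechanism. Bounding all $|S_j|$ simultaneously by Fubini and pigeonholing is a Minkowski-dimension argument. In the Hausdorff setting each tube is only partially covered at the selected scale, so good positions for all the required slices cannot be found by a union bound. Appendix~\ref{HKakeyasec} handles this inside Bourgain's proof of his Proposition 1.5 \cite{bourgain1999dimension}. There the relevant set has density only of order $(\log\log M)^{-C}$, and the bound of \cite{leng2024improved} supplies an $N_0$-term arithmetic progression in it, with $N_0$ depending on the projections in $SD(q)$. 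This lets $SD(q)$ be applied in place of Bourgain's Lemma 2.83.
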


\begin{proof}
    For an abelian group $G$ without torsion and some $\alpha$, following \cite{katz2002new} we say we have the property $SD(\alpha)$ for $G$ if we have the following property:

   \underline{\textbf{Property.}} There is a set of rational projections $\pi_1, \ldots, \pi_k: G^2 \to G$ (by this we mean $\pi_k (g_1, g_2) = a_{1, k} g_1 + a_{2, k} g_2$ with $a_{1, k}, a_{2, k} \in \mathbb{Z}$ and $a_{1, k}, a_{2, k}$ not both zero) where $a_{1, k} \neq - a_{2, k}, \forall k$ such that the following holds: Whenever a subset $S \subset G^2$ satisfy $|\pi_j (S)| \leq N, \forall j \leq k$, we always have $|\pi_{(1, -1)} (S)| \lesssim N^{\alpha}$. Here $\pi_{(1, -1)} (g_1, g_2) = g_1 - g_2$.

    Back to the proof of the theorem, we claim that by assumption, $SD(q)$ holds for $\mathbb{Z}$. To see this, suppose some $C_j B_j (x_1, x_2) = c_j x_1 + d_j x_2, C_j, c_j, d_j \in \mathbb{Z}$ where $d_j \neq 0$. If a set $S \subset \mathbb{Z}^2$ such that all $|S_j| = |\{c_j x_1 + d_j x_2: (x_1, x_2) \in S\}| \leq N$, then by plugging in $f_j$ as $\sum_{m \in S_j} 1_{[\frac{m-1}{C_j}, \frac{m+1}{C_j}]}$ in \eqref{BLmix}, we deduce that the projection of $S$ to the first variable has size $\lesssim N^q$. But this is a linearly transformed version of $SD(q)$ for $\mathbb{Z}$ and the claim in the beginning of this paragraph is justified.

    It is standard that a finite set in $\Bbb{R}^{d-1}$ can be embedded into $\Bbb{Z}$ that preserves equalities involving a finite set of linear combinations. From this we can see that $SD(q)$ for $\Bbb{Z}$ implies $SD(q)$ for $\Bbb{R}^{d-1}$. This idea is already implicitly recorded  in the literature, see e.g. Theorem 1.1 (2) in \cite{green2019arithmetic}.

    It is already known in the literature \cite{bourgain1999dimension, katz1999bounds, katz2002new, green2019arithmetic} that $SD(q)$ for $\Bbb{R}^{d-1}$ implies the Minkowski dimension of every Kakeya set in $\Bbb{R}^d$ is at least $\frac{d-1}{q}+1$, see also the survey \cite{katz2002recent}. By recent work in \cite{green2009new, green2017new} and the very recent \cite{leng2023improved, leng2024improved} in additive combinatorics, it is possible to strengthen this argument to obtain the Hausdorff version. We are not aware of this written anywhere in the literature, so we sketch its proof in Appendix \ref{HKakeyasec}. See Theorem \ref{SDimpliesKakeyaH}.
\end{proof}

It seems reasonable to the author to make the following conjecture, which, by Theorem \ref{BLmiximplieskakeya}, will imply Conjecture \ref{KakeyaH}.

\begin{conj}\label{mixSDalpha}
    For arbitrary $\alpha > 1$, \eqref{BLmix} holds for some choices of $m, p_j, q, r$ and rational projections $B_j$ with \eqref{assumptionofqandr} satisfied and $q < \alpha$.
\end{conj}

As we see in the proof of Theorem \ref{BLmiximplieskakeya}, Conjecture \ref{mixSDalpha} will imply $SD(q)$ for $\Bbb{Z}$ for $q$ arbitrarily close to $1$ and thus imply the Kakeya Conjecture \ref{KakeyaH}.

\begin{rem}\label{katzlem}
    In \cite{katz2006elementary}, Katz proved that one cannot prove $SD(q)$ for $q < \frac{3}{2}$ by elementary graph theoretic methods (see \cite{katz2006elementary} for precise definitions). This rules out a class of methods to prove \eqref{BLmix} for $q< \frac{3}{2}$, including the Katz-Tao method  in this paper to prove \eqref{mainthmBLineq}. I am indebted to Larry Guth for pointing this out to me.
\end{rem}

\eqref{BLmix} seems less intuitive for ``genuinely'' irrational projections (i.e. ones that cannot be simultaneously well-approximated by rationals in a suitable sense).  See Section 7, Remark 4 of \cite{christ2001certain} for a more detailed discussion when $m\geq 4$ (known to be a requirement to ask an interesting question in this direction).  


\begin{question}
    Assume \eqref{assumptionofqandr}. Does \eqref{BLmix} for $q<2$ have implications on Kakeya if  $B_j$'s cannot be  simultaneously made rational under affine transformations?
\end{question}

\subsection{The absence of perturbed versions of \eqref{BLmix}}
In this subsection, we present another key difference between \eqref{BL} and \eqref{BLmix}, regarding their \emph{perturbed} versions.

First, recall that the classical Brascamp-Lieb inequality \eqref{BL} allows small perturbations of the projections and we in fact have stability of the constant \cite{bennett2018stability}. Moreover, a stronger kind of stability holds in the classical setting: If each $f_j$ is a sum of characteristic functions of unit intervals, then each $f_j \circ B_j$ is a sum of characteristic functions of (infinite) unit cylinders. It is known that whenever \eqref{BL} holds, there is $\theta > 0$ such that \eqref{BL} still holds if the direction of each cylinder is perturbed by an angle $<\theta$. Here the perturbations of directions of individual cylinders do not have to be uniform.

The above-mentioned results were typically first proved in a local version. There we take the integration over a ball $B_R$ of radius $R$ ($R \to \infty$) and allow an $R^{\varepsilon}$-loss. After this, the $R^{\varepsilon}$-loss was also removed. See \cite{bennett2006multilinear, guth2010endpoint, carbery2013endpoint, guth2015short, zhang2017endpoint, zorin2020kakeya} and a more detailed introduction in \cite{Zhang2022Brascamp}. One important case is the multilinear Kakeya inequality proved in \cite{bennett2006multilinear, guth2010endpoint}. This illustrates another connection between the Brascamp-Lieb inequality and Kakeya, not to be confused with the connection we have covered in the prior subsections.

One can wonder if a perturbed version of \eqref{BLmix} in the same way is true. This would be a much stronger assertion than \eqref{BLmix}, and would be interesting from the multilinear Kakeya perspective. The following question makes this precise:
\begin{question}\label{perturbedques}
    Do there exist some choices of $m \in \Bbb{Z}^+, p_1, \ldots, p_m, q ,r > 0$, $\theta >0$ and directions $v_1, \ldots, v_j \in S^1$ satisfying \eqref{assumptionofqandr} and $q<2$ such that
    \begin{equation}
        \|\prod_{j=1}^m (\sum_{n=1}^{N_j} 1_{T_{j, n}})^{p_j}\|_{L_x^q L_y^r (B_R)} \lesssim_{\varepsilon} R^{\varepsilon} \prod_{j=1}^m N_j^{p_j}?
    \end{equation}
    
    Here each $T_{j, n} \subset \Bbb{R}^2$ is a rectangle of infinite length and width $1$ s.t. its direction makes an angle $<\theta$ against $v_j$, and $B_R$ is any disc of radius $R>1$ in $\Bbb{R}^2$.
\end{question}





A positive answer to question \ref{perturbedques} would provide a unique perspective to study \eqref{BLmix} and Conjecture \ref{mixSDalpha}. Unfortunately, it has a \emph{negative} answer.

\begin{thm}\label{unfortunatethm}
    The answer to Question \ref{perturbedques} is negative.
\end{thm}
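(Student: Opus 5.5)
The plan is to build an explicit counterexample that works for every choice of $m$, $p_j$, $q<2$, $r$, $\theta>0$ and directions $v_1,\ldots,v_m$. It imitates the bilinear example in the proof of Lemma \ref{newnecessarylem}. That example failed to extend to $m\ge 3$ because fixed generic directions cannot all be lattice directions of one lattice. Perturbation removes this obstruction: we may tilt every tube by less than $\theta$, so each $v_j$ can be replaced by a nearby lattice direction of a well-chosen lattice. The target is a configuration of $\sim K^2$ unit blobs with pairwise disjoint (unit-separated) $x$-projections, such that for each $j$ the blobs are covered by $N_j\lesssim_\theta K$ tubes in one direction within $\theta$ of $v_j$.

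\textbf{The lattice.} For a large integer $K$, let $\Lambda=\{k(K,0)+l(1,K):k,l\in\Z\}$ and take the points $p_{k,l}=(kK+l,\,lK)$ with $0\le k,l<K$. These $K^2$ points have distinct integer $x$-coordinates $kK+l$ and lie in a box of side $\sim K^2$. We therefore take $R\sim K^2$ and a disc $B_R$ containing them.

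\textbf{Approximating the directions.} The lattice vector $a(K,0)+b(1,K)=(aK+b,bK)$ has direction within $O(1/K)$ of the direction of $(a,b)$. Fix coprime integers $(a_j,b_j)$ with $|a_j|,|b_j|\lesssim_\theta 1$ whose direction is within $\theta/2$ of $v_j$. Then for $K$ large the lattice vector $w_j=(a_jK+b_j,b_jK)$ is within $\theta$ of $v_j$; this works also when $v_j$ is vertical or nearly horizontal. Consider the lines in direction $w_j$ through the points of the box. Since $|w_j|\sim_\theta K$ and the box has side $\sim K^2$, each such line contains $\sim_\theta K$ of the points. Hence $N_j\lesssim_\theta K$ lines of direction $w_j$ cover all $K^2$ points. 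We take $T_{j,n}$ to be the width-$1$ infinite tubes around these lines, all in the exact direction $w_j$, so the angle condition holds.

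\textbf{Comparing both sides.} Each $p_{k,l}$ lies on the core line of one tube from every family. So $\prod_j(\sum_n 1_{T_{j,n}})^{p_j}\ge 1$ on a disc of radius $\sim_{\theta} 1$ around $p_{k,l}$; the constant depends on the angles between the $w_j$. Shrinking these discs slightly, their $x$-projections are disjoint because the $x$-coordinates are distinct integers. Each of the $K^2$ discs contributes $\gtrsim 1$ to $\int_x(\int_y\cdots)^{q/r}$, so the left-hand side is $\gtrsim (K^2)^{1/q}=K^{2/q}$. The right-hand side is $R^\varepsilon\prod_j N_j^{p_j}\lesssim_\theta K^{2\varepsilon}K^{\sum p_j}=K^{1+2\varepsilon}$ by \eqref{scalingmix}. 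Since $q<2$ gives $2/q>1$, choosing $\varepsilon<\tfrac1q-\tfrac12$ and letting $K\to\infty$ contradicts the inequality.

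The main point needing care is the counting in the second step: checking that direction-$w_j$ lines hit the box's lattice points $\sim K$ at a time, so that $N_j\lesssim K$, uniformly for every $j$ including nearly vertical or horizontal $v_j$. This is a covolume computation: $\Lambda$ has covolume $K^2$, the box has area $K^4$, and the lattice lines in direction $w_j$ are spaced $K^2/|w_j|\sim K$ apart, giving $\sim K$ lines crossing the box. Boundary effects only change constants.
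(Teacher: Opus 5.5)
Your argument is correct. It shares the paper's skeleton: a Christ-type lattice example with $\sim K^2$ points whose $x$-projections are distinct, lying in a disc of radius $\sim K^2$, and covered by $O(K)$ parallel unit tubes in each family, so that $K^{2/q}\lesssim K^{1+2\varepsilon}$ fails. Where you differ is in how the lattice is built.

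The paper first normalizes $v_1,v_2$ to $(1,\pm1)$ by a linear change of variables. It then fixes an irrational $\alpha$ and rationals $r_j$ with $(\alpha+r_j,\alpha-r_j)$ close to $v_j$, and uses a Dirichlet approximant $P/Q$ of $\alpha$ to build a $P\times Q$ grid. On that grid, $(1,\pm1)$ are exact grid directions. The tubes in the fixed directions $(\alpha+r_j,\alpha-r_j)$ only pass within $O(1)$ of the grid points. Distinctness of the $x$-projections comes from $\gcd(P,Q)=1$.

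Your sheared lattice, generated by $(K,0)$ and $(1,K)$, has primitive vectors $(aK+b,bK)$ within $O(1/K)$ of any fixed rational direction $(a,b)$. So every tube lies in an exact lattice direction and passes through the points, and distinctness of the $x$-projections is just base-$K$ expansion. You need no normalization, no Diophantine approximation, and no special care for $m$ or for vertical or horizontal $v_j$.

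The price is that your directions $w_j$ change with the scale, and for each fixed $K$ they are rational. So you refute exactly the question as posed. The paper's directions are independent of $Q$, so it shows more: one fixed configuration of parallel families within $\theta$ of $(v_j)$, involving an irrational slope, already violates the $R^\varepsilon$-local inequality, in the spirit of Theorem \ref{Christthm}(b).

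Two small precision fixes:
\begin{itemize}
\item Not every line in direction $w_j$ meets $\sim K$ points; lines near the corners meet fewer. Only the number of lines matters, and in lattice coordinates $b_jk-a_jl$ takes at most $(|a_j|+|b_j|)(K-1)+1$ values on $\{0,\dots,K-1\}^2$.
\item The disc of radius $\tfrac12$ about each point lies in every width-$1$ tube whose core line passes through it. So no dependence on the angles between the $w_j$ enters.
\end{itemize}
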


\begin{proof}
    We will construct counterexamples to the question where each family of tubes are parallel. The idea is inspired by \cite[Section 2]{christ2001certain}.

    Fix choices $m \in \Bbb{Z}^+, p_1, \ldots, p_m, q ,r > 0$, $\theta >0$ and directions $v_1, \ldots, v_j \in S^1$ satisfying \eqref{assumptionofqandr} and $q<2$. We want to prove it is impossible to have
    \begin{equation}\label{perturbed}
        \|\prod_{j=1}^m (\sum_{n=1}^{N_j} 1_{T_{j, n}})^{p_j}\|_{L_x^q L_y^r (B_R)} \lesssim_{\varepsilon} R^{\varepsilon} \prod_{j=1}^m N_j^{p_j}
    \end{equation}
where each $T_{j, n}$ has the property described in Question \ref{perturbedques}.

Because small perturbations are allowed, without loss of generality we may assume $v_1 \neq v_2$ and neither  $v_1$ nor $v_2$ is parallel to coordinate axes. Then by simple linear transformations, without loss of generality we can assume $v_1$ is parallel to $(1, 1)$ and $v_2$ is parallel to $(1, -1)$ (see the beginning of Section 2 of \cite{christ2001certain} for details).

Take an arbitrary irrational number $\alpha \in (1, 2)$ (for example one can take $\alpha = \sqrt{2}$ for definiteness). We can find rational numbers $r_3, \ldots, r_m$ such that the directions of $(\alpha + r_3, \alpha -r_3), \ldots, (\alpha + r_m, \alpha -r_m)$ are $\theta$-close to $v_3, \ldots, v_m$ respectively. We will see its relevance shortly.

Since $\alpha$ is irrational, there is a sequence of coprime pairs $(P, Q) \in \Bbb{Z}_+^2$ increasing in both coordinates such that
\begin{equation}\label{Dirichletqpprox}
    \left|\alpha -\frac{P}{Q}\right|< \frac{1}{Q^2}.
\end{equation}

Take a pair $(P, Q)$ satisfying \eqref{Dirichletqpprox} with $Q>10$. We now construct tubes $T_{j, n}$ to be plugged into the left hand side of \eqref{perturbed}. We will use a family of auxiliary unit balls and first construct them.

For $1 \leq i \leq P$ and $1 \leq k \leq Q$, let the point $X_{ik} = (x_{ik}, y_{ik})$ satisfy $\begin{cases}
x_{ik} - y_{ik} = iQ,\\
x_{ik} + y_{ik} = kP.
\end{cases}$ Let $\mathscr{X} = \{X_{ik}: 1 \leq i \leq P, 1 \leq k \leq Q\}$ and a set of unit balls $\mathscr{B} = \{\text{unit balls around points in } \mathscr{X}\}$.

By definition, there are $P$ lines in the direction of $(1, 1)$ whose union covers the whole $\mathscr{X}$. Choose $T_{1, 1}, \ldots, T_{1, P}$ to be the unit tubes with core lines being these. Then the union of these tubes covers every ball in $\mathscr{B}$. Similarly, we can choose unit tubes $T_{2, 1}, \ldots, T_{2, Q}$ in the direction of $(1, -1)$ such that  the union of these tubes covers every ball in $\mathscr{B}$.

For every $j \in \{3, \ldots, m\}$, we now construct tubes $T_{j, 1}, T_{j, 2}, \ldots$ parallel to $(\alpha+ r_j, \alpha - r_j)$ so that their union covers $\mathscr{B}$. To see how many tubes we need to use, we consider the possible values of $(\alpha - r_j) x_{ik} - (\alpha + r_j) y_{ik}$. This is equal to $$\alpha (x_{ik} - y_{ik}) - r_j (x_{ik} + y_{ik}) = i \alpha  Q - k r_j  P.$$

Suppose $r_j = \frac{u_j}{v_j}$. Because of \eqref{Dirichletqpprox} and $1 < \alpha < 2$, we see by the above equality that $(\alpha - r_j) x_{ik} - (\alpha + r_j) y_{ik}$ must be $O(1)$-close to some $\frac{NP}{v_j}$ where $N \in \Bbb{Z}$ and $N = O(Q v_j)$.  Hence for each $j$ we need $O(Q)$ tubes in the family $\{T_{j, n}\}$ in total so that their union covers every ball in $\mathscr{B}$. Note there are only finitely many $v_j$ and they should all be viewed as constants. We have suppressed the dependence of $v_j$ in the last assertion.

Now we are about to contradict \eqref{perturbed}. Note $P \sim Q$. For the constructed $T_{j, n}$, there is $O(Q)$ tubes in each family, and each $\sum_{n=1}^{N_j} 1_{T_{j, n}}$ is $\gtrsim 1$ on every ball in $\mathscr{B}$. Moreover, the projections of balls in $\mathscr{B}$ on the $x$-axis is a union of unit intervals around $(\frac{iQ+kP}{2}, 0)$ where $i \in [1, P]$ and $j \in [1, Q]$. Since $P$ and $Q$ are coprime, the number of these intervals is $\gtrsim Q^2$. Finally, note that every ball in $\mathscr{B}$ is in a ball of radius $R= O(Q^2)$. Thus by a similar reasoning to the proof of Lemma \ref{newnecessarylem}, we know the left hand side of \eqref{perturbed} is $\gtrsim Q^{\frac{2}{q}}$. However, the right hand side is $O(Q^{1+2\varepsilon})$. This is impossible when $Q \to \infty$ since $q<2$.
\end{proof}

We make a remark to explain some consequences of Theorem \ref{unfortunatethm}. There are three major known ways to affirmatively answer the non-mixed-norm counterpart of Question \ref{perturbedques}: the heat flow method in \cite{bennett2006multilinear}, the  method of  auxiliary polynomials \cite{guth2010endpoint} and a conceptually simpler multiscale analysis \cite{guth2015short}. However, Theorem \ref{unfortunatethm} implies they all fail if one wants to prove \eqref{BLmix} by positively answering Question \ref{perturbedques}. This is another manifestation of very different behavior of \eqref{BLmix} and its non-mixed-norm counterpart \eqref{BL}.

It may be of interest to imitate the three approaches above and see what prevents one from getting \eqref{perturbed}. If one tries to imitate the argument in \cite{bennett2006multilinear}, it seems the extrapolation has a chance to work, but there seems to be an additional term working against the estimate. For the other two methods in \cite{guth2010endpoint} and \cite{guth2015short}, the author does not know a good way to adapt them. 

\appendix

\section{$SD(\alpha)$ implies Hausdorff version of Kakeya}\label{HKakeyasec}

It remains to prove
\begin{thm}\label{SDimpliesKakeyaH}
    If $SD(\alpha)$ holds for $\Bbb{R}^{d-1}$, then the Hausdorff dimension of every Kakeya set $K \subset\Bbb{R}^d$ is at least $\frac{d-1}{q}+1$.
\end{thm}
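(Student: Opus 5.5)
We follow Bourgain's arithmetic-slicing argument in its Minkowski form \cite{bourgain1999dimension, katz1999bounds, katz2002new}. The one new input is a way to handle a Kakeya set covered by balls of many different scales. Throughout, $q$ in the statement means the exponent $\alpha$ in $SD(\alpha)$.

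\emph{Step 1: multiscale reduction.} Suppose $\dim_H K < \beta < \frac{d-1}{\alpha}+1$. Cover $K$ by dyadic balls $B(x_i, r_i)$ with $\sum r_i^{\beta}$ small. Then pigeonhole as in Bourgain's Hausdorff argument. For each direction $e$ in a positive-measure set $\Omega \subset S^{d-1}$ and some dyadic scale $\delta = 2^{-k}$, the unit segment $\ell_e \subset K$ has a subset $F_e$ with $|F_e| \gtrsim 1/k^2$ (one-dimensional measure) covered by the balls of radius $\delta$. Pigeonholing further, one fixed $\delta$ works for a set $\Omega_\delta$ of directions of measure $\gtrsim 1/k^2$. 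Let $E_\delta$ be the union of the radius-$\delta$ balls. Then $|E_\delta| \lesssim \delta^{d-\beta}$.

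\emph{Step 2: slicing with long progressions.} In the Minkowski argument, one slices by three parallel hyperplanes $x_d = 0, \tfrac12, 1$ (or by finitely many hyperplanes $x_d = t_0 + s\, t_j$ matching the coefficients of the rational projections). A segment crossing these hyperplanes at $a$ and $b$ then has the points $a + t_j(b-a)$ in the intermediate slices. Here, however, the segment meets $E_\delta$ only in the set $F_e$ of measure $\gtrsim 1/k^2$, not in the whole segment. So we need $F_e$ to contain an arithmetic progression whose shape matches the coefficients of the projections $\pi_1, \ldots, \pi_k$ of $SD(\alpha)$, at $\delta$-resolution. The number of such progressions should be $\gtrsim$ the expected count up to a factor depending only on the density $1/k^2$.

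This is exactly quantitative Szemer\'edi, in the counting form of Varnavides. Discretize $F_e$ into $\sim\delta^{-1}$ cells of density $\sim 1/k^2 = 1/\log^2(1/\delta)$. The bounds of Green--Tao \cite{green2009new, green2017new} (for length $4$) and Leng--Sah--Sawhney \cite{leng2023improved, leng2024improved} (for general length) give $r_L(N) \ll N/(\log N)^{c}$ or better. In particular, density $1/\log^2 N$ forces many $L$-term progressions once the bound beats any fixed power of $\log$. Leng--Sah--Sawhney's bound $N\exp(-(\log\log N)^{c})$ does. The corresponding Varnavides count is $\gtrsim \exp(-\exp(O(\log k)^{C}))\delta^{-2}$, which is $\delta^{o(1)}\delta^{-2}$. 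After averaging we choose a progression shape, i.e.\ a base point $t_0$ and common difference $s$. For a set $\Omega' \subset \Omega_\delta$ of directions of measure $\delta^{o(1)}$, every point $t_0 + s\,t_j$ lies in $F_e$.

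\emph{Step 3: applying $SD(\alpha)$.} For each $e \in \Omega'$, take a $\delta$-separated family of directions and record the intersection points with the slices $x_d = t_0 + s\,t_j$. This gives a set $S \subset (\mathbb{R}^{d-1})^2$ of pairs $(a, b)$ at $\delta$-resolution, with $|S| \gtrsim \delta^{o(1)}\delta^{-(d-1)}$ distinct directions $b - a$ (rescaled). Each projection $\pi_j(S)$ lies in the $\delta$-neighbourhood of a slice of $E_\delta$. By Fubini and a pigeonhole over $t_0, s$, each such slice has $\lesssim \delta^{-o(1)}|E_\delta|\,\delta^{-(d-1)-1}$ $\delta$-cells, i.e.\ $N \lesssim \delta^{-o(1)}\delta^{\,1-\beta}$. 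The projection $\pi_{(1,-1)}(S)$ encodes the directions, which are distinct. So $SD(\alpha)$ (discretized via the embedding remark, at the cost of bounded multiplicities) gives $\delta^{-(d-1)+o(1)} \lesssim N^{\alpha} \lesssim \delta^{-\alpha(\beta-1)-o(1)}$. Letting $\delta \to 0$ yields $\beta \ge \frac{d-1}{\alpha}+1$, a contradiction.

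The main obstacle is Step 2. The density loss $1/k^2$ must cost only $\delta^{o(1)}$ in the progression count, which is precisely why the recent quantitative Szemer\'edi bounds are needed. We also need uniformity of the progression shape across directions, which we handle by averaging over $(t_0, s)$ before selecting $\Omega'$.
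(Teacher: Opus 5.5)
Your Step 2 has a genuine gap: the quantitative Szemer\'edi input is misapplied. With a single dyadic scale, the pigeonholing leaves each good line with density $\eta\sim k^{-2}=(\log_2 M)^{-2}$ in an interval of $M\sim\delta^{-1}$ cells.

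The Leng--Sah--Sawhney bound $r_L(N)\ll N\exp(-(\log\log N)^{c_L})$ has $c_L<1$. It is therefore \emph{weaker} than every power-of-log bound, since $\exp(-(\log\log N)^{c_L})\ge(\log N)^{-A}$ for any fixed $A$ and large $N$. Your comparison is backwards. A set of density $(\log M)^{-2}$ in $[M]$ is not guaranteed to contain even one $L$-term progression. Green--Tao's $(\log N)^{-c}$ bound for $L=4$ also fails, since its $c$ is tiny.

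Your own Varnavides estimate shows the failure. The threshold length $N_1(\eta)$ satisfies $\log N_1\approx\exp((2\log k)^{1/c_L})$, which is much larger than $k\approx\log_2 M$ because $1/c_L>1$. So Varnavides does not apply inside $[M]$ at all. Moreover, the factor $\exp(-\exp(O(\log k)^{C}))$ with $C=1/c_L>1$ is eventually smaller than $2^{-k^A}$ for every $A$, so it is not $\delta^{o(1)}$.

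The missing idea is the one the paper's sketch takes from Bourgain's proof of his Proposition 1.5: concatenate consecutive dyadic scales. Group the radii into blocks $[\delta^{1+\epsilon},\delta]$, i.e.\ $k\in[(1+\epsilon)^j,(1+\epsilon)^{j+1})$, and thicken every ball in a block to radius $\delta$.
\begin{itemize}
\item This costs only a factor $\delta^{-\epsilon\beta}$ in the $\delta$-covering number, which is harmless because $\epsilon$ can be taken arbitrarily small.
\item There are only $O(\epsilon^{-1}\log\log(1/\delta))$ blocks down to scale $\delta$. Pigeonholing with weights $\sim j^{-2}$ therefore gives each good line density $\gtrsim_\epsilon(\log\log M)^{-2}$.
\item This density is $(\log\log M)^{-O(1)}\gg\exp(-(\log\log M)^{c_L})$. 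So Leng--Sah--Sawhney does give $N_0$-term progressions, here with Varnavides threshold $N_1=M^{o(1)}$ and hence a loss of only $\delta^{o(1)}$.
\end{itemize}
This is exactly the paper's ``density $(\log\log M)^{-C}$ versus $e^{-(\log\log M)^{c}}$'' comparison. It is also why Gowers' $(\log\log N)^{-c_L}$ bound would not suffice.

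With this change, your Steps 1 and 3 go through essentially as you describe, with $(1+\epsilon)\beta$ in place of $\beta$. There is one typo to fix: the slice count should be $|E_\delta|\,\delta^{-(d-1)}$, which is what matches your $N\lesssim\delta^{1-\beta}$.
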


Theorem \ref{SDimpliesKakeyaH} has a standard proof but may be of independent interest. We state it and sketch the proof in this appendix. Recall that $SD(\alpha)$ was defined in the proof of Theorem \ref{BLmiximplieskakeya}. 

To prove Theorem \ref{SDimpliesKakeyaH}, we will follow Bourgain's proof of Proposition 1.5 in \cite{bourgain1999dimension}. We remark that one useful idea in this proof is to concatenate  a few consecutive dyadic scales.

\begin{proof}[Proof sketch of Theorem \ref{SDimpliesKakeyaH}]
    The theorem can be proved in the same way as Bourgain's proof of Proposition 1.5 in \cite{bourgain1999dimension}. We only note the necessary modifications.

    We set up exactly as what Bourgain did in his argument. Instead of using Heath-Brown and Szemer\'{e}di's result right below (3.22) in \cite{bourgain1999dimension}, we can now use the result of \cite{leng2024improved} (see also \cite{green2009new, green2017new, leng2023improved} for prior results) to conclude that since the density of $Q$ is $O((\log \log M)^{-C})$, much larger than $e^{-(\log \log M)^c}$, we can find an $N_0$-terms arithmetic progression for an arbitrary $N_0$ fixed beforehand. Depending on the projections in $SD(\alpha)$ we have, we can set $N_0$ large enough so that we can use $SD(\alpha)$ in place of Bourgain's application of his Lemma 2.83 (right below (3.26) in \cite{bourgain1999dimension}) to deduce $$\mathcal{N}_{\delta} (\{a_{\xi} - b_{\xi}: \xi \in \mathcal{D}_r\}) <c \mathcal{N}_{\delta} (E_{\delta}^r)^{q+}$$ in place of Bourgain's (3.30). The conclusion then follows in the same way as \cite{bourgain1999dimension}.
\end{proof}

\bibliography{ref}{}
\bibliographystyle{alpha}
\vspace{1cm}

\noindent Ruixiang Zhang.  UC Berkeley. Email address: ruixiang@berkeley.edu

\end{document}